\newif\ifpreprint
\providecommand{\keywords}[1]
{
  \noindent \small	
  \textbf{Keywords:} #1
}
\providecommand{\amscode}[1]
{
  \noindent \small	
  \textbf{AMS subject classifications:} #1
}
\newtheorem{theorem}{Theorem}[section]
\newtheorem{lemma}[theorem]{Lemma}
\newtheorem{algorithm}[theorem]{Algorithm}
\newtheorem{remark}[theorem]{Remark}
\newtheorem{definition}[theorem]{Definition}
\newcommand{\Mstar}{M_{\mkern-0.9mu\raisebox{2.2pt}{${}_\star$}}}
\newcommand{\rd}{\, \mathrm{d}}
\newcommand{\EE}{\mathbb{E}}
\newcommand{\NN}{\mathbb{N}}
\newcommand{\RR}{\mathbb{R}}
\newcommand{\ZZ}{\mathbb{Z}}
\newcommand{\Hcal}{\mathcal{H}}
\newcommand{\Ocal}{\mathcal{O}}
\newcommand{\rmse}{\mathrm{rmse}}
\newcommand{\wor}{\mathrm{wor}}
\DeclareMathOperator{\supp}{supp}
\DeclareMathOperator{\ter}{ter}
\definecolor{darkblue}{RGB}{0,60,180} 
\definecolor{darkgreen}{RGB}{0,130,70}
\definecolor{darkorange}{RGB}{180,60,0}
\title{Randomizing the trapezoidal rule gives\\ the optimal RMSE rate in Gaussian Sobolev spaces\thanks{This work of the third author was supported by NTNU project grant 81617985.}}
\title[Optimal randomized quadrature for Gaussian weight]{Randomizing the trapezoidal rule gives\\ the optimal RMSE rate in Gaussian Sobolev spaces}
\author{Takashi Goda\thanks{School of Engineering, University of Tokyo, 7-3-1 Hongo, Bunkyo-ku, Tokyo 113-8656, Japan.}, Yoshihito Kazashi\thanks{Department of Mathematics and Statistics, University of Strathclyde, Glasgow G1 1XH, UK}, Yuya Suzuki\thanks{Department of Mathematical Sciences, Norwegian University of Science and Technology, Sentralbygg II, Alfred Getz’ vei 1, Gl{\o}shaugen, 7034 Trondheim, Norway}}
\author{Takashi Goda}
\address{School of Engineering, University of Tokyo, 7-3-1 Hongo, Bunkyo-ku, Tokyo 113-8656, Japan.}
\email{goda@frcer.t.u-tokyo.ac.jp}
\author{Yoshihito Kazashi}
\address{Department of Mathematics and Statistics, University of Strathclyde, Glasgow G1 1XH, UK}
\email{y.kazashi@strath.ac.uk}
\author{Yuya Suzuki}
\address{Department of Mathematical Sciences, Norwegian University of Science and Technology, Sentralbygg II, Alfred Getz’ vei 1, Gl{\o}shaugen, 7034 Trondheim, Norway; Department of Mathematics and Systems Analysis, Aalto University School of Science, Espoo, FI-00076 Aalto, Finland}
\email{yuya.suzuki@aalto.fi}
\thanks{This work was supported by JSPS KAKENHI Grant Number 20K03744 (T.G.) and by NTNU project grant 81617985 (Y.S.).}
\subjclass[2010]{Primary 65C05, 65D30, 65D32, 68W20}
\keywords{trapezoidal rule, Gaussian Sobolev space, randomized setting, root-mean-squared error, lower bound}
\date{\today}
\begin{document}

\maketitle

\begin{abstract}
Randomized quadratures for integrating functions in Sobolev spaces of order $\alpha \ge 1$, where the integrability condition is with respect to the Gaussian measure, are considered. 
In this function space, the optimal rate for the worst-case root-mean-squared error (RMSE) is established. 
Here, optimality is for a general class of quadratures, in which adaptive non-linear algorithms with a possibly varying number of function evaluations are also allowed. 
The optimal rate is given by showing matching bounds. First, a lower bound on the worst-case RMSE of $O(n^{-\alpha-1/2})$ is proven, where $n$ denotes an upper bound on the expected number of function evaluations. 
It turns out that a suitably randomized trapezoidal rule attains this rate, up to a logarithmic factor. 
A practical error estimator for this trapezoidal rule is also presented. Numerical results support our theory.
\end{abstract}

\ifpreprint
\keywords{trapezoidal rule, Gaussian Sobolev space, randomized setting, root-mean-squared error, lower bound}
\amscode{68W20, 65C05, 65D30, 65D32}
\fi

\section{Introduction}
We study numerical integration of functions with respect to the standard Gaussian measure. Given a function $f: \RR\to \RR$, our problem is to approximate the integral
\[ I(f)=\int_{\RR}f(x)\rho(x)\rd x\quad \text{with}\quad \rho(x)=\frac{1}{\sqrt{2\pi}}e^{-x^2/2},\]
based on pointwise function evaluations.

Of particular interest in this paper are randomized algorithms, which typically combine a fixed (deterministic) quadrature rule and the Monte Carlo sampling; see for example \cite[Section~12.7]{Monahan.JF_2011_NumericalMethodsStatistics} and references therein as well as \cite{Ow97,Di11,Ul17}.
One benefit of these algorithms is that one may construct online error estimators to assess their accuracy, which is often difficult with deterministic algorithms. 
Another is that they are robustly efficient for various smoothness of the integrand. 
Even in the absence of smoothness of the integrand, which is typically required for deterministic quadratures, they may work well at least as accurate as the plain vanilla Monte Carlo method; in the presence of smoothness, they exploit it and may achieve accuracy superior to the Monte Carlo methods.

How accurate can a randomized algorithm be?  
To address this question, we fix a function class and a figure of merit. 
We then consider a general class of algorithms and derive a lower bound for the error, which quantifies the best possible accuracy using this class of algorithms in the sense specified below. 

It turns out that this best possible rate is, up to a logarithmic factor, attained by a randomized trapezoidal rule, which we propose in this paper. 
This result is a random counterpart of our recent result \cite{KSG2022}, in which we showed that the trapezoidal rule is, up to a logarithmic factor, optimal to approximate $I(f)$ in a suitable sense.

Following \cite{KSG2022}, throughout this paper, we assume that $f$ is in a Sobolev space of integer order $\alpha\geq 1$, denoted by $\Hcal_{\alpha}$. 
More precisely, with the normed vector space $L_{\rho}^2(\RR)$ defined by 
\[ \|f\|^2_{L_{\rho}^2} := \int_{\RR}|f(x)|^2\rho(x)\rd x < \infty,\]
for an integer $\alpha\geq 1$,  $\Hcal_{\alpha}$ is defined by 
\[ \Hcal_{\alpha} := \left\{ f\in L_{\rho}^2(\RR)  \;\middle|\; \|f\|_{\alpha}:=\left(\sum_{\tau=0}^{\alpha}\|f^{(\tau)}\|^2_{L_{\rho}^2}\right)^{1/2}<\infty \right\}, \]
where $f^{(\tau)}$ denotes the $\tau$-th weak derivative of $f$.

For this class of functions, we quantify the efficiency of the algorithms in terms of the worst-case root-mean-squared error (RMSE), where the mean is taken on the underlying probability space where the randomization is considered. To define this, we first introduce a class of deterministic algorithms; realizations of the randomized algorithm we consider take their values in the following class of deterministic algorithms.

In the deterministic worst-case setting which we build upon, we approximate $I(f)$ by a (deterministic) quadrature rule of the form 
\begin{align}\label{eq:deterministic_rule}
A^{\mathrm{det}}_{m}(f)=\varphi_{m(f)}\left( x_1,x_2,\ldots,x_{m(f)},f(x_1),f(x_2),\ldots,f(x_{m(f)})\right),
\end{align}
with nodes $x_i\in \RR$, $i=1,\dots,m(f)$, 
where $\varphi_{m(f)}: \RR^{2m(f)}\to \RR$ is a (linear or non-linear) mapping. 
Here, each node $x_i$ can be chosen sequentially by using the information already obtained through $x_1,\ldots,x_{i-1}$ and $f(x_1),\ldots,f(x_{i-1})$, for $i\ge 2$, i.e., with
$x_1$ pre-selected independently of $f$, 
$x_i=\psi_i(x_1,\dots,x_{i-1},f(x_1),\dots,f(x_{i-1}))$ for some (not necessarily linear) function $\psi_i$,  $i\ge 2$. 
This sequential choice of nodes is continued until a stopping criterion is satisfied, and the number of quadrature nodes $m$ is not determined a priori. More precisely, the sequential choice of nodes is terminated 
as soon as
\begin{equation}
\ter_i(x_1,\dots,x_i,f(x_1),\dots,f(x_i))=1,\label{eq:Boolean-func}
\end{equation}
where $\ter_i: \RR^{2i}\to \{0,1\}$ is a Boolean function 
for each $i$. 
Then the positive integer $m(f)$, representing the total number of function evaluations, is given by
\[ m(f)=\min\left\{ i\in \NN \; \middle|\;  \ter_i(x_1,\dots,x_i,f(x_1),\dots,f(x_i))=1\right\}.\]
For example, if we define the functions $\ter_i$ by $\ter_i\equiv 0$ if $i<m$ and $\ter_m\equiv 1$ for some fixed $m\in \NN$, then the number of quadrature nodes is equal to $m$ independently of the integrand $f$. Another example is the case where $\ter_i$ is a function of an error estimator
\[
\mathrm{Estimator}(x_1,\dots,x_i,f(x_1),\dots,f(x_i))
\]
and a user-specified error tolerance $\mathrm{TOL}$; $\ter_i$ is defined so that it returns $1$ if the estimated error is below $\mathrm{TOL}$, and returns $0$ otherwise. In this case, the number of quadrature nodes in general depends on $f$. 

In this paper, following the standard terminology in the context of information-based complexity \cite[Sections~4.1.1 \& 4.2.1]{NW2008} and \cite{W1986,N1996}, we say that this type of sequential node selection is \emph{adaptive} and that \eqref{eq:deterministic_rule} is an \emph{adaptive algorithm}. 
If the number $m(f)$ does not vary with $f$ and each node $x_i$ is chosen without the information $f(x_1),\ldots,f(x_{m})$, we say that \eqref{eq:deterministic_rule} is a \emph{non-adaptive algorithm}. 
In this context, the adaptive stopping rule and the adaptive node selection are usually considered separately, and each type of adaptivity improves an algorithm in different ways; see \cite{W1986,N1996}.

Our interest in this paper is in the randomized setting. We follow 
\cite[Definition~4.35]{NW2008} for the class of algorithms we consider:
\begin{definition}\label{def:rand_alg}
A randomized quadrature rule $A$ is a pair of a probability space $(\Omega,\Sigma, \mu)$ and a family $(A^{\omega})_{\omega\in \Omega}$ of mappings such that the following holds:
\begin{enumerate}
    \item For each fixed $\omega\in\Omega$, the mapping $A^{\omega}: \Hcal_{\alpha}\to \RR$ is a  deterministic quadrature rule of the form~\eqref{eq:deterministic_rule}.
    \item Let $M(f,\omega)$ be the number of nodes used in $A^{\omega}$ for $f\in \Hcal_{\alpha}$ and $\omega$. Then the function $\omega\mapsto M(f,\omega)$ is measurable for each fixed $f$.
\end{enumerate}
\end{definition}
\noindent
Finally, for a randomized quadrature rule $A$, its \emph{cardinality} is defined as the supremum of the expected number of nodes over $f\in \Hcal_{\alpha}$:
\[ \#(A):=\sup_{f\in \Hcal_{\alpha}}\int_{\Omega}M(f,\omega)\rd \mu(\omega).\]

We quantify the efficiency of the algorithm by the worst-case error. 
For a  deterministic algorithm $A^{\mathrm{det}}_m$ as in \eqref{eq:deterministic_rule}, the (deterministic) worst-case error in $\Hcal_{\alpha}$ is given by
\begin{equation}\label{eq:det-WCE}
 e^{\wor}(A^{\mathrm{det}}_m,\Hcal_{\alpha}):=\sup_{\substack{f\in \Hcal_{\alpha}\\ \|f\|_{\alpha}\leq 1}}\left| I(f)-A^{\mathrm{det}}_m(f)\right|.   
\end{equation}
For the randomized setting, we consider the worst-case RMSE
\[ e^{\rmse}(A,\Hcal_{\alpha}) := \sup_{\substack{f\in \Hcal_{\alpha}\\ \|f\|_{\alpha}\leq 1} }\left(\int_{\Omega}(I(f)-A^{\omega}(f))^2  \rd \mu(\omega)\right)^{1/2}.\]
For $A^{\mathrm{det}}_m$ we have  $e^{\rmse}(A^{\mathrm{det}}_m,\Hcal_{\alpha})=e^{\wor}(A^{\mathrm{det}}_m,\Hcal_{\alpha})$.

The main contribution of this paper is two-fold.
\begin{enumerate}
    \item
    We establish a lower bound for   the worst-case RMSE $e^{\rmse}(A,\Hcal_{\alpha})$ for any integer $\alpha\geq1$. More precisely, we prove that for any $n\geq 1$  and any randomized quadrature rule $A$ with cardinality $\#(A)\leq n$, $e^{\rmse}(A,\Hcal_{\alpha})$ is bounded from below by $c_{\alpha}n^{-\alpha-1/2}$ with a constant $c_{\alpha}>0$ depending only on $\alpha$; see  Section~\ref{sec:lower_bound}.
    \item 
    We propose an algorithm that achieves this rate up to a  logarithmic factor; hence, our algorithm is optimal, up to a logarithmic factor, in the sense of the worst-case RMSE. More precisely, we develop a suitably truncated randomized trapezoidal rule with the number of nodes $M(f,\omega)$ being randomly chosen independently of $f\in \Hcal_{\alpha}$. This quadrature rule is shown to be an unbiased estimator of $I(f)$, which allows for practical error estimation.
\end{enumerate}
As briefly mentioned earlier, the second contribution above is motivated by our recent paper \cite{KSG2022}, in which we considered a suitably truncated trapezoidal rule and established the optimality in terms of the worst-case error. 
There, building upon the work of Nuyens and Suzuki \cite{NS2021}, we showed that for any $\alpha\geq 1$  a suitably truncated trapezoidal rule achieves the optimal rate $\Ocal(n^{-\alpha})$ up to a logarithmic factor.

As already mentioned, we randomize this trapezoidal rule and establish the optimality in the sense of the worst-case RMSE. Without randomizing the same algorithm is sub-optimal; it will achieve only $\Ocal(n^{-\alpha})$ in the sense of the worse-case RMSE, while the lower bound we establish in this paper is $n^{-\alpha-1/2}$ up to a constant factor. 
We also note that the worst-case deterministic error for the standard, deterministic, Gauss--Hermite quadrature in $\Hcal_{\alpha}$ is bounded from below and above by $n^{-\alpha/2}$ up to some constant factors (note also that, from $e^{\rmse}(A^{\mathrm{det}}_m,\Hcal_{\alpha})=e^{\wor}(A^{\mathrm{det}}_m,\Hcal_{\alpha})$ for deterministic algorithms, these matching bounds of the rate $n^{-\alpha/2}$ also apply to the worst-case RMSE for the standard deterministic Gauss--Hermite quadrature). 
This rate is merely half of the best possible rate; see \cite{DILP2018} together with \cite{KSG2022}.

Our idea for randomization comes from a classical work of Bakhvalov \cite{Ba1961}, who introduced the method of choosing the number of quadrature nodes randomly. His method has been explored further quite recently in the context of quasi-Monte Carlo integration over the high-dimensional unit cube \cite{DGS2022,KKNU2019}. 
Another related work is by Wu \cite{Wu2022}, who considered a randomized trapezoidal rule similar to ours. Compared to our methods, Wu used a fixed number of nodes; the function space considered there was the fractional Sobolev spaces of order less than $2$ over a finite interval, for which the author showed that the convergence rate in $L^p$ ($p\in[2,\infty)$), where the $p$-integrability is with respect to the underlying probability measure, improves the standard convergence rate by $1/2$. 
In contrast, our results show that, for the function classes we consider, we are able to exploit smoothness more than $2$ and improve the $L^2$-convergence rate by $1/2$. See also Remark~\ref{rem:rate-and-tail} for a related discussion.

We start by showing a general lower bound for the worst-case RMSE in Section~\ref{sec:lower_bound}. 
The lower bound established in this section shows the best possible rate. 
In Section~\ref{sec:random_trap}, we show that this rate, up to a logarithmic factor, is achieved by a randomized trapezoidal rule. Numerical results in Section~\ref{sec:experiments} support our theory. 

\section{A general lower bound}\label{sec:lower_bound}
In this section, we prove the following general lower bound on $e^{\rmse}(A,\Hcal_{\alpha})$ that holds for any randomized quadrature rule $A$ with its cardinality $\#(A)$ at most $n$. 

\begin{theorem}[General lower bound on the worst-case RMSE]\label{thm:lower_bound}
Let integers $\alpha\geq 1$ and $n\geq 1$ be given. For any randomized quadrature rule $A$, possibly adaptive or nonlinear, as in Definition~\ref{def:rand_alg} with $\#(A)\leq n$, the inequality
\[ e^{\rmse}(A,\Hcal_{\alpha})\geq \frac{c_{\alpha}}{n^{\alpha+1/2}} \]
holds, where the constant $c_{\alpha}>0$ depends on $\alpha$ but is independent of $n$ and $A$.
\end{theorem}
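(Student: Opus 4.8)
The plan is to use the standard Bakhvalov-type reduction from the randomized setting to the average-case setting, combined with a bump-function construction adapted to the Gaussian weight. First I would invoke the general principle (see \cite[Section~4.3.3]{NW2008}) that a lower bound for the worst-case RMSE of any randomized rule with cardinality at most $n$ follows from a lower bound for the average-case error, over a suitable probability measure on the unit ball of $\Hcal_\alpha$, of any \emph{deterministic} rule using at most $\tilde n := 2n$ (say) function evaluations; by Yao's/Bakhvalov's argument it suffices to exhibit a family of ``fooling'' functions on which no deterministic rule with $\tilde n$ nodes can do well on average. So the crux is a purely deterministic construction.

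Next, the construction. Fix a smooth, compactly supported bump $\phi$ supported in $(0,1)$ with $\int_{\mathbb R}\phi(x)\rho(x)\rd x \neq 0$ (or, more robustly, use $\phi$ with $\int \phi \,\rd x \neq 0$ and restrict attention to a bounded region where $\rho$ is bounded below). For a parameter $N \asymp n$, partition a fixed interval, say $[0,1]$, into $N$ subintervals $I_j = [(j-1)/N, j/N]$ and let $\phi_j(x) = N^{-\alpha}\phi(N(x - (j-1)/N))$ be the rescaled bump living on $I_j$. A direct computation, as in \cite{KSG2022}, gives $\|\phi_j\|_\alpha \asymp$ constant uniformly in $j$ and $N$ (the $\tau$-th derivative scales like $N^{\tau-\alpha}$, and on the compact region $\rho$ contributes only bounded factors), while $|I(\phi_j)| \asymp N^{-\alpha-1}$. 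Now consider random sign combinations $f_\varepsilon = \frac{c}{\sqrt N}\sum_{j=1}^N \varepsilon_j \phi_j$ with $\varepsilon_j$ i.i.d.\ Rademacher and $c$ chosen so that $\|f_\varepsilon\|_\alpha \le 1$ for every sign pattern; since the $\phi_j$ have disjoint supports, $\|f_\varepsilon\|_\alpha^2 = \frac{c^2}{N}\sum_j \|\phi_j\|_\alpha^2 \asymp c^2$, so a fixed $c$ works.

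The key estimate is then that any deterministic rule $A^{\mathrm{det}}_{\tilde n}$ using at most $\tilde n \le N/2$ points can ``see'' at most $\tilde n$ of the intervals $I_j$; on the remaining at least $N/2$ intervals the function value and all queried information are independent of the corresponding signs $\varepsilon_j$, so $A^{\mathrm{det}}_{\tilde n}(f_\varepsilon)$ is (conditionally) independent of those signs, while $I(f_\varepsilon) = \frac{c}{\sqrt N}\sum_j \varepsilon_j I(\phi_j)$ has conditional variance at least $\frac{c^2}{N}\cdot\frac N2 \cdot \min_j I(\phi_j)^2 \asymp N^{-2\alpha-2}$. Taking expectations over $\varepsilon$ and using $\mathbb E_\varepsilon (I(f_\varepsilon) - A^{\mathrm{det}}(f_\varepsilon))^2 \ge \operatorname{Var}_\varepsilon(\,\cdot \mid \text{queried signs}) \gtrsim N^{-2\alpha-2}$ yields, after the Bakhvalov reduction, $e^{\rmse}(A,\Hcal_\alpha)^2 \gtrsim N^{-2\alpha-2} \asymp n^{-2\alpha-2}$, i.e.\ the claimed $c_\alpha n^{-\alpha-1/2}$. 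One technical wrinkle is handling adaptivity and the varying stopping time $m(f)$: the cardinality bound $\#(A) \le n$ controls only the \emph{expected} number of nodes, so I would use Markov's inequality to restrict to the event $\{M(f,\omega) \le 2n\}$, which has probability at least $1/2$, and bound the contribution of the complementary event trivially (the integrand $(I(f)-A^\omega(f))^2$ can be kept bounded by choosing the $f_\varepsilon$ uniformly bounded, which they are). The main obstacle, and the place requiring the most care, is making the adaptive/variable-cardinality argument rigorous: one must argue that for \emph{adaptively} chosen nodes the set of ``touched'' intervals still has size at most the number of evaluations, and then average correctly over both $\varepsilon$ and $\omega$ — this is where the clean separation into (i) Bakhvalov's reduction to deterministic average-case and (ii) the disjoint-support variance bound pays off. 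The Gaussian weight itself causes no real difficulty since the whole construction is confined to a fixed compact set.
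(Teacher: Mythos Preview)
Your overall strategy---bump functions with disjoint supports in a fixed compact interval, random signs, and a Markov step to control the variable cardinality---is exactly the paper's. The paper works with individual normalized bumps $\pm f_j$ rather than full Rademacher sums and carries out the adaptive-node counting by hand (tracking a chain of sets $F_i^\omega$) rather than citing a black-box reduction, but this is presentational.

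There is, however, a genuine scaling error. With $\phi_j(x)=N^{-\alpha}\phi\bigl(N(x-(j-1)/N)\bigr)$ one has $\|\phi_j^{(\tau)}\|_{L^2_\rho}^2\asymp N^{2\tau-2\alpha-1}$ (the support has length $1/N$), so $\|\phi_j\|_\alpha^2\asymp N^{-1}$, not a constant. Hence your $f_\varepsilon=\frac{c}{\sqrt N}\sum_j\varepsilon_j\phi_j$ satisfies $\|f_\varepsilon\|_\alpha^2\asymp c^2 N^{-1}$, and the variance bound you derive, $\asymp N^{-2\alpha-2}$, yields only $e^{\rmse}\gtrsim n^{-\alpha-1}$ (note also that $\sqrt{N^{-2\alpha-2}}=N^{-\alpha-1}$, not $N^{-\alpha-1/2}$ as you write). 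The fix is to drop the extra $1/\sqrt N$: with $f_\varepsilon=c\sum_j\varepsilon_j\phi_j$ one gets $\|f_\varepsilon\|_\alpha\asymp 1$ and conditional variance $\gtrsim N\cdot N^{-2(\alpha+1)}=N^{-2\alpha-1}$. Equivalently, first normalize each bump to unit $\Hcal_\alpha$-norm as the paper does, so that $I(f_j)\gtrsim N^{-\alpha-1/2}$, and then the $1/\sqrt N$ in the Rademacher sum is correct.

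A smaller point: the event $\{M(f,\omega)\le 2n\}$ depends on both $\omega$ and $f=f_\varepsilon$, so a single Markov step over $\omega$ is not enough. You need Markov once over $\omega$ applied to the $\varepsilon$-averaged cardinality, and then again over $\varepsilon$ to extract a large set of sign patterns for which the (now deterministic) rule uses boundedly many nodes---exactly the two-step Markov argument the paper carries out. Your sketch gestures at this but does not quite say it.
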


To show this result, we first construct fooling functions, which are crucial for our proof. For $n\in \NN$ given, consider $10n$ intervals 
\begin{align}
    \iota_{j}:=(\xi_{j-1},\xi_{j}):=((j-1)/n,j/n),\quad j=-5n+1,\dots,5n.
    \label{eq:intervals}
\end{align}
By construction, $\iota_{j}$, $j=-5n+1,\dots,5n$ are disjoint. For each $j$, we construct a function $f_j\in \Hcal_{\alpha}$ 
supported on $\iota_j$ that integrates to a large value, still having a unit Sobolev norm.

\begin{lemma}[Fooling function and its integral]\label{lem:fooling}
Let integers $\alpha\geq 1$ and $n\geq 1$ be given. Then, for each $j=-5n+1,\dots,5n$, there exists a function $f_j\in \Hcal_{\alpha}$ such that $f_j\geq 0$, $\supp(f_j)=\iota_j$, $\|f_j\|_{\alpha}=1$, and
\[I(f_j)\geq \frac{\eta_{\alpha}}{n^{\alpha+1/2}},\]
where $\eta_{\alpha}>0$ is a constant depending on $\alpha$ but independent of $n$, $j$ and $f_j$. Here, $\supp(f_j)$ denotes the set-theoretical support $\supp(f_j)=\{x\in\mathbb{R}\mid f_j\neq 0\}$.
\end{lemma}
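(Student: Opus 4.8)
The plan is to obtain every $f_j$ by rescaling one fixed reference bump to the interval $\iota_j$ and then normalizing in $\|\cdot\|_\alpha$. Fix once and for all a function $\phi\in C^\infty(\RR)$ with $\phi\ge 0$, $\{x\in\RR : \phi(x)\neq 0\}=(0,1)$, and $\int_0^1\phi(y)\rd y>0$ (e.g.\ $\phi(x)=\exp(-1/(x(1-x)))$ on $(0,1)$ and $\phi\equiv0$ elsewhere). For $j\in\{-5n+1,\dots,5n\}$ put
\[ g_j(x):=\phi\big(nx-(j-1)\big), \]
so that $g_j\in C^\infty(\RR)$, $\{x : g_j(x)\neq0\}=\iota_j$, and by the chain rule $g_j^{(\tau)}(x)=n^{\tau}\phi^{(\tau)}(nx-(j-1))$. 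Since $g_j$ is smooth with compact support it lies in $\Hcal_\alpha$, and the candidate is $f_j:=g_j/\|g_j\|_\alpha$; it then remains to bound $\|g_j\|_\alpha$ from above and $I(g_j)$ from below, with constants independent of $n$ and $j$.

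The key observation is that for $x\in\iota_j$ with $j\in\{-5n+1,\dots,5n\}$ one has $x\in(-5,5)$, so $\rho(5)\le\rho(x)\le\rho(0)$ uniformly in $n$ and $j$. Applying the change of variables $y=nx-(j-1)$ (which maps $\iota_j$ onto $(0,1)$) gives
\[ \|g_j^{(\tau)}\|_{L^2_\rho}^2 = n^{2\tau-1}\int_0^1 \big|\phi^{(\tau)}(y)\big|^2\,\rho\!\left(\tfrac{y+j-1}{n}\right)\rd y . \]
Summing over $\tau=0,\dots,\alpha$ and using $n^{2\tau-1}\le n^{2\alpha-1}$ for $n\ge1$ yields the two-sided estimate
\[ \rho(5)\,n^{2\alpha-1}\|\phi^{(\alpha)}\|_{L^2(0,1)}^2 \;\le\; \|g_j\|_\alpha^2 \;\le\; \rho(0)\,n^{2\alpha-1}\sum_{\tau=0}^{\alpha}\|\phi^{(\tau)}\|_{L^2(0,1)}^2 \;=:\; C_\alpha^2\, n^{2\alpha-1}, \]
with $C_\alpha>0$ depending only on $\alpha$; in particular $\|g_j\|_\alpha\le C_\alpha\,n^{\alpha-1/2}$.

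For the integral, the same substitution and the lower bound $\rho\ge\rho(5)$ on $\iota_j$ give
\[ I(g_j)=\frac1n\int_0^1\phi(y)\,\rho\!\left(\tfrac{y+j-1}{n}\right)\rd y \;\ge\; \frac{\rho(5)}{n}\int_0^1\phi(y)\rd y, \]
hence
\[ I(f_j)=\frac{I(g_j)}{\|g_j\|_\alpha} \;\ge\; \frac{\rho(5)\int_0^1\phi(y)\rd y}{C_\alpha}\cdot\frac{1}{n^{\alpha+1/2}} \;=:\;\frac{\eta_\alpha}{n^{\alpha+1/2}}, \]
with $\eta_\alpha>0$ depending only on $\alpha$. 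Together with $\|f_j\|_\alpha=1$, $f_j\ge0$, and $\supp(f_j)=\supp(g_j)=\iota_j$, this proves the lemma. I do not expect a genuine obstacle here: the only points requiring care are tracking the factors $n^{\tau}$ produced by the chain rule and checking that the rescaled arguments of $\rho$ stay in the fixed compact set $[-5,5]$, which is what makes all the constants uniform in $j$ and $n$.
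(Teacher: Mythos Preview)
Your proof is correct and follows essentially the same approach as the paper: both build $f_j$ by rescaling a fixed reference bump to $\iota_j$ and normalizing, then bound $\|g_j\|_\alpha$ from above and $I(g_j)$ from below using that $\iota_j\subset(-5,5)$ forces $\rho$ to be uniformly bounded away from $0$ and $\infty$. The only difference is cosmetic: the paper takes the polynomial bump $h_j(x)=\big((x-\xi_{j-1})/(\xi_j-\xi_{j-1})\big)^{\alpha}\big(1-(x-\xi_{j-1})/(\xi_j-\xi_{j-1})\big)^{\alpha}$, which has just enough regularity to sit in $\Hcal_\alpha$, whereas you use a $C^\infty$ bump; the resulting computations and constants are of the same form.
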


\begin{proof}
Define $h_j: \RR\to \RR$ such that $h_j\geq 0$ and $\supp(h_j)=\iota_j$ by
\[ h_j(x)=\begin{cases} \displaystyle \left(\frac{x-\xi_{j-1}}{\xi_j-\xi_{j-1}}\right)^{\alpha}\left( 1-\frac{x-\xi_{j-1}}{\xi_j-\xi_{j-1}}\right)^{\alpha} & \text{if $x\in \iota_j,$}\\ 0 & \text{otherwise.}\end{cases}\]
As we showed in \cite[Proof of Lemma~3.1]{KSG2022}, we have $h_j\in \Hcal_{\alpha}$. 
Moreover, with
\[ S_{\alpha,\tau} := \sum_{\ell_1,\ell_2=0}^{\alpha}(-1)^{\ell_1+\ell_2}\binom{\alpha}{\ell_1}\binom{\alpha}{\ell_2}\frac{(\alpha+\ell_1)!}{(\alpha+\ell_1-\tau)!}\frac{(\alpha+\ell_2)!}{(\alpha+\ell_2-\tau)!},\]
we have
\begin{align*}
    \|h_j\|_{\alpha}^2 & = \sum_{\tau=0}^{\alpha}\int_{\iota_j}
        \bigl| h_j^{(\tau)}(x)\bigr|^2 \rho(x)\rd x 
    \leq \sum_{\tau=0}^{\alpha}\frac{e^{-\min(\xi_{j-1}^2,\xi_j^2)/2}}{\sqrt{2\pi}}\int_{\iota_j}
        \bigl| h_j^{(\tau)}(x)\bigr|^2 \rd x \\
    & \leq \sum_{\tau=0}^{\alpha}\frac{S_{\alpha,\tau}}{\sqrt{2\pi}(\xi_j-\xi_{j-1})^{2\tau-1}} = \sum_{\tau=0}^{\alpha}\frac{n^{2\tau-1}S_{\alpha,\tau}}{\sqrt{2\pi}} \leq n^{2\alpha-1}\sum_{\tau=0}^{\alpha}\frac{S_{\alpha,\tau}}{\sqrt{2\pi}},
\end{align*}
and
\begin{align*}
    I(h_j) & = \int_{\iota_j} h_j(x)\rho(x)\rd x \geq \frac{e^{-\max(\xi_{j-1}^2,\xi_j^2)/2}}{\sqrt{2\pi}}\int_{\iota_j} h_j(x)\rd x \\
    & \geq \frac{e^{-25/2}}{\sqrt{2\pi}}(\xi_j-\xi_{j-1})\int_0^1x^{\alpha}(1-x)^{\alpha}\rd x = \frac{(\alpha!)^2}{n(2\alpha+1)!\sqrt{2\pi e^{25}}}.
\end{align*}
It follows that, with $f_j:=h_j/\|h_j\|_{\alpha}$, we have $f_j\in \Hcal_{\alpha}$, $f_j\geq 0$, $\supp(f_j)=\iota_j$, $\|f_j\|_{\alpha}=1$
and
\begin{align*} I(f_j)= \frac{I(h_j)}{\|h_j\|_{\alpha}}\geq \frac{(\alpha!)^2}{n^{\alpha+1/2}(2\alpha+1)!\sqrt{2\pi  e^{25}}}\left( \sum_{\tau=0}^{\alpha}\frac{S_{\alpha,\tau}}{\sqrt{2\pi}}\right)^{-1/2}=: \frac{\eta_{\alpha}}{n^{\alpha+1/2}},
\end{align*}
for $j=-5n+1,\ldots,5n$. This completes the proof.
\end{proof}

We are now ready to prove Theorem~\ref{thm:lower_bound}.
\begin{proof}[Proof of Theorem~\ref{thm:lower_bound}]
Let $f_j$, $j=-5n+1,\ldots,5n$, be as in Lemma~\ref{lem:fooling}. By construction, we have $\|f_j\|_{\alpha}=1$. 
Hence, the definition of  $e^{\rmse}(A,\Hcal_{\alpha})$ implies
\begin{align}\label{eq:ave_case}
    & e^{\rmse}(A,\Hcal_{\alpha}) \notag \\
    & \quad \geq \max_{f\in\{\pm f_{j}\mid j=-5n+1,\dots,5n\}}\left(\int_{\Omega}(I(f)-A^{\omega}(f))^2 \rd\mu(\omega)\right)^{1/2} \notag \\
    & \quad \geq \left( \frac{1}{10n}\sum_{j=-5n+1}^{5n}\int_{\Omega}\frac{(I(f_j)-A^{\omega}(f_j))^2+(I(-f_j)-A^{\omega}(-f_j))^2}{2} \rd \mu(\omega)\right)^{1/2}\notag\\
    & \quad = \left(
    \frac{1}{10n}\int_{\Omega}\sum_{j=-5n+1}^{5n}\frac{(I(f_j)-A^{\omega}(f_j))^2+(I(-f_j)-A^{\omega}(-f_j))^2}{2} \rd \mu(\omega)\right)^{1/2}.
\end{align}
We will show that at least $n$ out of $10n$ functions $f_j$, $j=-5n+1,\dots,5n$, satisfy  $(I(f_j)-A^{\omega}(f_j))^2+(I(-f_j)-A^{\omega}(-f_j))^2\geq {2\eta_{\alpha}^2}/{n^{2\alpha+1}}$ on a set of positive measure $E\subset \Omega$. Here, the choice of these $n$ functions may depend on $\omega\in E$.

Define 
$E\in\Sigma$, where $\Sigma$ is the $\sigma$-algebra as in Definition~\ref{def:rand_alg}, by
\[ E:=\left\{\omega\in \Omega\; \middle|\;  \frac{1}{10n}\sum_{j=-5n+1}^{5n}M(f_j,\omega)< 2n\right\}.
\]
Moreover, for $\omega \in E$ denote by $F_0^{\omega}$ the subset of $f_j$'s whose corresponding quadrature $A^{\omega}(f_j)$ uses at most $4n$ nodes:
with \[{\mathrm{Ind}_0^{\omega}:=
\bigl\{
    j\in \{-5n+1,\dots,5n\}
\;\big|\;
M({f_j},\omega)\leq 4n \bigr\}},\]   
let 
\begin{equation*}
F_0^{\omega}:=\bigl\{f_j\in \{f_{-5n+1},\dots,f_{5n}\}\;\big|\; j\in \mathrm{Ind}_0^{\omega}\bigr\}.
\end{equation*}
For $i=1,2,\ldots, 4n,$ we recursively define
\[\mathrm{Ind}_i^{\omega}:=
\bigl\{
    j\in \mathrm{Ind}_{i-1}^{\omega}
\;\big|\;
i\leq M(f_j,\omega),\, 
x^{(j)}_i\notin \iota_j \bigr\}
\cup 
\bigl\{
    j\in \mathrm{Ind}_{i-1}^{\omega}
\;\big|\;
M(f_j,\omega)<i \bigr\},
\]
where $x^{(j)}_i=x^{(j)}_i(\omega)$ denotes the $i$-th node of $A^{\omega}(f_j)$ and $\iota_{j}$ is defined in \eqref{eq:intervals}, 
and 
\[ F_{i}^{\omega}:=\bigl\{ f_j\in F_{i-1}^{\omega}\mid j\in \mathrm{Ind}_i^{\omega}
\bigr\}.
\]
$F_{i}^{\omega}$ is constructed so that if $i\leq M(f_j,\omega)$, i.e., when the $i$-th node  exists, then $f_j\in F_{i}^{\omega}$ implies  $f_j(x_i^{(j)})=0$, and if $M(f_j,\omega)<i$, i.e., when the $i$-th node does not exist, then we keep $f_j\in F^{\omega} _{i-1}$ in  $F^{\omega} _i$. 
We will show $|F_{4n}^{\omega}|\geq n$ for $\omega\in E$ and that $F_{4n}^{\omega}$ is the sought set that gives the aforementioned lower bound.

First, we show that $E$ has a positive measure. 
By the definition of the cardinality of a randomized quadrature rule, it holds that
\begin{align*}
 \int_{\Omega}\frac{1}{10n}\sum_{j=-5n+1}^{5n}M(f_j,\omega)\rd \mu(\omega) &\leq \sup_{j=-5n+1,\ldots,5n}\int_{\Omega}M(f_j,\omega) \rd \mu(\omega) 
 \\
 & \leq \sup_{f\in \Hcal_{\alpha}}\int_{\Omega}M(f,\omega)\rd \mu(\omega)\leq n.
 \end{align*}
With $E^c:=\Omega\setminus E$, it follows from Markov's inequality that
\[ \mu(E)=1-\mu(E^c)\geq 1-\frac{1}{2n}\int_{\Omega}\frac{1}{10n}\sum_{j=-5n+1}^{5n}M(f_j,\omega)\rd \mu(\omega)\geq \frac{1}{2}.\]

For $\omega\in E$ it turns out  $|F_0^{\omega}|\geq 5n$, i.e., 
for at least half of the $10n$ functions $f_j$, $j=-5n+1,\ldots,5n$, 
the integral estimate $A^{\omega}(f_j)$ is computed by no more than $4n$ nodes.
Indeed, from Markov's inequality for the uniform measure $\nu$ on $(\{-5n+1,\dots,5n\},2^{\{-5n+1,\dots,5n\}})$ and the corresponding expectation $\mathbb{E}_\nu$, together with the definition of $E$ we see that the number of  $f_j$'s whose corresponding nodes exceed $4n$ is less than half:
\begin{align*}
    \frac{|\{j\in\{-5n+1,\ldots,5n\}\mid M(f_j,\omega)> 4n\}|}{10n}
    & = \nu(\{j\mid M(f_j,\omega)> 4n\}) \\
    & \kern-2cm \leq \frac1{4n}\mathbb{E}_\nu[M(f_{\cdot},\omega)]=
    \frac{1}{4n}\cdot \frac{1}{10n}\sum_{j=-5n+1}^{5n}M(f_j,\omega)<\frac{1}{2}.
\end{align*}

Next, we will show $| F_{i}^{\omega}|\geq  |F_{i-1}^{\omega}|-1$ for $i=1,\dots,4n$. 
For this, we will show that, given $\omega\in E$, the first $i$ quadrature nodes of $A^\omega(f_j)$ for $f_j\in F^\omega_{i}$ are uniquely determined, independently of $f_j\in F^\omega_{i}$.

To see this, first fix $f_j\in 
F_{i}^{\omega}\subset 
F_{i-1}^{\omega}$ arbitrarily. By construction of $F_{i}^{\omega}$, the interval $\iota_j$ does not contain any of the first $i-1$ nodes  $x_1^{(j)},\dots,x_{i-1}^{(j)}$ of $A^{\omega}(f_j)$. Although the next node $x_i^{(j)}$ is (possibly adaptively) determined by the set of $x^{(j)}_1,\ldots,x^{(j)}_{i-1}$ and $f_j(x^{(j)}_1),\ldots,f_j(x^{(j)}_{i-1})$, these function values are all equal to $0$. 
Now recall that the first node $x^{(j)}_1=x_1$ of $A^\omega(f_j)$  does not depend on $f_j$, since initially we do not have any information on $f_j$. 
Therefore, by the recursive definition of $F_i^{\omega}$, we see that for $f_j\in F_i^{\omega}$ the first $i$ quadrature nodes of $A^\omega(f_j)$ are all equal for $f_j\in F_i^{\omega}$, independently of the corresponding index $j$. 
Indeed, for $f_j,f_{j'}\in F_i^{\omega}$, we have
\begin{align*}
    x^{(j)}_{\ell}
    &=
    \psi_\ell(x_1^{(j)},x_2^{(j)},\dots,x_{\ell-1}^{(j)},f_j(x_1^{(j)}),\dots,f_j(x_{\ell-1}^{{(j)}}))\\
    &=
    \psi_\ell(x_1,x_2^{(j)},\dots,x_{\ell-1}^{(j)},0,\dots,0)\\
    &=
    \psi_\ell(x_1,x_2^{(j')},\dots,x_{\ell-1}^{(j')},0,\dots,0)=x^{(j')}_{\ell}\quad\text{ for}\quad\ell=2,\dots, 
    \min\{i,M(f_{j},\omega)\},
\end{align*}
and if $\min\{i,M(f_{j},\omega)\}=M(f_{j},\omega)$ then 
for $\ell= M(f_{j},\omega)+1,\dots,i$ the point $x^{(j)}_{\ell}$ does not exist.
Here, note that if $M(f_{j},\omega)<i$ then $M(f_{j},\omega)=M(f_{j'},\omega)$, and if $i\leq M(f_{j},\omega)$ then $i\leq M(f_{j'},\omega)$. 
Indeed, for $\ell=1,\dots,i$, the Boolean function \eqref{eq:Boolean-func} satisfies
\begin{align*}
    & \ter_{\ell}(x_1^{(j')},x_2^{(j')},\dots,x_{\ell}^{(j')},f_{j'}(x_1^{(j')}),\dots,f_{j'}(x_{\ell}^{{(j')}})) \\
    & \quad =
    \ter_{\ell}(x_1^{(j)},x_2^{(j)},\dots,x_{\ell}^{(j)},0,\dots,0)\\
    & \quad =
    \ter_{\ell}(x_1^{(j)},x_2^{(j)},\dots,x_{\ell}^{(j)},f_{j}(x_1^{(j)}),\dots,f_{j}(x_{\ell}^{{(j)}})).
\end{align*}
Now, for $f_j\in F_{i-1}^{\omega}$,  we have $f_j\not\in F_{i}^{\omega}$ if and only if $x_i^{(j)}\in \iota_j$. 
Since the $i$-th node $x^{(j)}_i$ can be contained in at most one interval $\iota_{\bar{j}}=((\bar{j}-1)/n,\bar{j}/n)$ among the supports $(\iota_{k})_k$ corresponding to  $(f_k)_k\subset F_{i-1}^{\omega}$, 
the above argument leads to 
$|\{f_j\in F_{i-1}^{\omega}\mid f_j\not\in F_{i}^{\omega}\}|\leq 1$, and thus
\[ | F_{i}^{\omega}|\geq  |F_{i-1}^{\omega}|-1.\]
Therefore, $|F_{4n}^{\omega}|\geq |F_0^{\omega}|-4n\geq n$ holds. 

Now, from $F_{4n}^{\omega}\subset F_{0}^{\omega}$, for $f_j\in F_{4n}^{\omega}$ no quadrature point of $A^{\omega}(f_j)$ is in the support of $f_j$. 
Therefore, $|F_{4n}^{\omega}|\geq n$ 
means that among $10n$ functions  $f_{j}$, $j=-5n+1,\dots,5n$, at least $n$ of them have no quadrature node of $\{A^{\omega}(f_j)\}_{j=-5n+1,\dots,5n}$ in their supports. 
Hence, for these (at least) $n$ functions, the function values used in $A^{\omega}(f_j)$ are all $0$, which also holds true for $-f_j$. Therefore, for these $f_j$'s, we obtain the equality $A^{\omega}(f_j)=A^{\omega}(0)=A^{\omega}(-f_j)$ and
\begin{align*}
& (I(f_j)-A^{\omega}(f_j))^2+(I(-f_j)-A^{\omega}(-f_j))^2 \\
& \quad =(I(f_j)-A^{\omega}(f_j))^2+(-I(f_j)-A^{\omega}(f_j))^2 \notag \\
& \quad = 2(I(f_j))^2+2(A^{\omega}(f_j))^2\geq 2(I(f_j))^2\geq \frac{2\eta_{\alpha}^2}{n^{2\alpha+1}}.\label{eq:error-fj-nice}
\end{align*}
For the rest of the (at most $4n$) functions, $(I(f_j)-A^{\omega}(f_j))^2+(I(-f_j)-A^{\omega}(-f_j))^2$ is trivially bounded below by $0$. 

Applying the results to \eqref{eq:ave_case}, we obtain 
\[ e^{\rmse}(A,\Hcal_{\alpha}) \geq \left(\mu(E)\cdot \left(\frac{n}{10n}\cdot \frac{\eta_{\alpha}^2}{n^{2\alpha+1}}+\frac{9n}{10n}\cdot 0\right)\right)^{1/2} \ge \frac{\eta_{\alpha}}{2\sqrt{5}n^{\alpha+1/2}},  \]
which completes the proof of the theorem.
\end{proof}

\begin{remark}
The idea of proving a lower bound on the randomized error by the average-case error over bump functions with mutually disjoint supports goes back to Bakhvalov \cite{Ba1959}. We also refer to \cite[Chapter~2]{No1988} and \cite[Chapter~17]{NW2010} for more information on error analyses for the randomized setting.
\end{remark}

\section{Randomized trapezoidal rule}\label{sec:random_trap}
Here we develop a randomized quadrature rule $A$ that achieves the almost optimal rate of convergence in the sense of the worst-case RMSE $e^{\rmse}(A, \Hcal_{\alpha})$. This algorithm is based on the suitably truncated deterministic trapezoidal rule we considered in \cite{KSG2022}, which is given by
\[ A^*_{n,T}=\frac{2T}{n}\sum_{j=0}^{n-1}f(\xi_j^*)\rho(\xi_j^*) \quad \text{with}\quad \xi_j^*=T\left(\frac{2j}{n}-1\right), j=0,\ldots,n-1.\]
Here, $n$ denotes the number of nodes and $T>0$ is a parameter that controls the cut-off of the integration domain from $\RR$ to $[-T,T]$. As shown in \cite{KSG2022}, $A^*_{n,T}$ turns out to achieve, up to a logarithmic factor, the optimal rate of convergence in the sense of the worst-case error in $\Hcal_{\alpha}$, for any integer $\alpha\geq 1$. 

In this paper, we introduce its randomized counterpart that makes $e^{\rmse}(A,\Hcal_{\alpha})$ small. In what follows, we denote the cumulative distribution function of the standard Gaussian distribution by $\Phi$.

\begin{algorithm}\label{alg:rand_trap}
    Let $f:\RR\to \RR$, $n\geq 4$ and $T>0$ be given. To approximate the integral $I(f)$, 
    with independent random variables $\Mstar\sim U\{\lfloor n/2\rfloor, \ldots, n-2\}$ and $\delta\sim U(0,1)$, 
    we define the randomized trapezoidal rule $A_{n,T}=(A_{n,T}^{\Mstar,\delta})_{\Mstar,\delta}$ by
    \begin{align}\label{eq:rand_trap}
        A_{n,T}^{\Mstar,\delta}=\frac{2T}{\Mstar}\sum_{j=0}^{\Mstar-1}f(\xi_j^*)\rho(\xi_j^*)+\Phi(-T)f(\xi_{\mathrm{left}}^*)+(1-\Phi(T))f(\xi_{\mathrm{right}}^*),
    \end{align}
where we let
\[ \xi_j^{*}:=T\left(\frac{2(j+\delta)}{\Mstar}-1\right),\quad j=0,\ldots,\Mstar-1,\]
and the end nodes $\xi_{\mathrm{left}}^*$ and $\xi_{\mathrm{right}}^*$ are independent of $\Mstar$ and sampled from the truncated normal distributions on $(-\infty,-T]$ and $[T,\infty)$, respectively.
\end{algorithm}

Each node $\xi_j$ in the 
first term of \eqref{eq:rand_trap} is  uniformly distributed over the interval $(T(2j/\Mstar-1), T(2(j+1)/\Mstar-1))$, $j=0,\dots,\Mstar-1$. 
For $\xi_{\mathrm{left}}^*$ and $\xi_{\mathrm{right}}^*$, we specify only their marginal distributions, truncated normals. 
Any random variables $\xi_{\mathrm{left}}^*$ and $\xi_{\mathrm{right}}^*$ having the marginals specified above that are independent of $\Mstar$, in particular those that are not independent of $\xi_j$'s, allow the results in this paper to hold.
For instance, one straightforward implementation is to use the inversion method, i.e., $\xi_{\mathrm{left}}^*:=\Phi^{-1}(\delta \Phi(-T))$, and $\xi_{\mathrm{right}}^*:= \Phi^{-1}((1-\delta) \Phi(T)+\delta)$, where $\delta$ is as in Algorithm~\ref{alg:rand_trap}. 
As such, we omit $\xi_{\mathrm{left}}^*$ and $\xi_{\mathrm{right}}^*$ from the notation $A_{n,T}^{\Mstar,\delta}$.

However, in view of implementation, sampling from such truncated distributions by the inversion method can be numerically unstable and inefficient when $T$ is large; see \cite{Chopin.N_2011_FastSimulationTruncated}. As pointed out therein, one needs to be careful when using the inversion method for the right tail of the distribution. Hence, it can be more convenient to sample them independently by other methods.
We refer to \cite[Chapter~9.1]{De86} and \cite{Botev.ZI_2017_NormalLawLinear,Chopin.N_2011_FastSimulationTruncated} among many others for random number generation from the tail of normal distributions.

In Algorithm~\ref{alg:rand_trap}, the cut-off parameter $T>0$ should be carefully chosen depending on $n$ (and $\alpha$). This point will be discussed later in this section. The symmetry of the standard Gaussian distribution around $0$ implies  $\Phi(-T)=1-\Phi(T)$ for any $T>0$, i.e., the function values at $\xi_{\mathrm{left}}^*$ and $\xi_{\mathrm{right}}^*$ are weighted equally in $A_{n,T}^{\Mstar,\delta}$.

We emphasize that $A_{n,T}$ is a randomized non-adaptive linear quadrature rule; $\Mstar$, $\delta$, $\xi_{\mathrm{left}}^*$ and $\xi_{\mathrm{right}}^*$ are the only $\omega$-dependent components of $A_{n,T}$ and neither depends on the integrand $f$.
Furthermore, we have $\#(A_{n,T})\leq n$. 
This is because $\Mstar\sim U\{\lfloor n/2\rfloor, \ldots, n-2\}$, and moreover, for each realization of $\Mstar$ and $\delta$, the deterministic rule $A_{n,T}^{\Mstar,\delta}$ utilizes $\Mstar+2$ nodes:  $\xi_{\mathrm{left}}^*$, $\xi_{\mathrm{right}}^*$, and $\xi_j^{*}$, $j=0,\ldots,\Mstar-1$. 
Hence, our result in Theorem~\ref{thm:upper_bound} below shows that adaptivity and nonlinearity are not needed to achieve the optimality (up to a logarithmic factor) in the sense that is considered here (cf.~the general lower bound in Theorem~\ref{thm:lower_bound}).

\subsection{Unbiasedness and error estimation}\label{subsec:unbiased}
Before proving an upper bound on $e^{\rmse}(A_{n,T},\Hcal_{\alpha})$,  we show that our randomized quadrature rule is unbiased and allows for practical error estimation. 
Let us start by showing the unbiasedness of $A_{n,T}$.

\begin{lemma}[Unbiasedness]\label{lem:unbiased}
Let $\alpha\geq 1$, $n\geq 4$, and $T>0$ be given. For any $f\in \Hcal_{\alpha}$, we have
\[ \EE\left[ A_{n,T}^{\Mstar,\delta}(f)\right] := \frac{1}{n-1-\lfloor n/2\rfloor}\sum_{m_\star=\lfloor n/2\rfloor}^{n-2}\int_0^1 A_{n,T}^{m_\star,\delta}(f) \rd \delta = I(f). \]
\end{lemma}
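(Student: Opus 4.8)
The goal is to show $\EE[A_{n,T}^{\Mstar,\delta}(f)] = I(f)$ by taking the expectation over the two independent random variables $\Mstar$ and $\delta$ (and the tail nodes $\xi_{\mathrm{left}}^*,\xi_{\mathrm{right}}^*$, which enter linearly). The plan is to exploit the independence structure: since $\Mstar$ and $\delta$ are independent, condition on $\Mstar = m_\star$ and first integrate over $\delta \sim U(0,1)$; then average the resulting expression over $m_\star \in \{\lfloor n/2\rfloor,\dots,n-2\}$. The key realization is that the $\delta$-average of the trapezoidal bulk term already reproduces the truncated integral $\int_{-T}^{T} f(x)\rho(x)\,\mathrm{d}x$ exactly, independently of $m_\star$, so the outer average over $m_\star$ is trivial.

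First I would handle the bulk sum. For fixed $m_\star$, the nodes are $\xi_j^* = T(2(j+\delta)/m_\star - 1)$ for $j=0,\dots,m_\star-1$. As $\delta$ ranges over $(0,1)$, the point $\xi_j^*$ sweeps uniformly over the subinterval $(T(2j/m_\star-1),\, T(2(j+1)/m_\star-1))$, whose length is $2T/m_\star$. Hence
\[
\int_0^1 \frac{2T}{m_\star}\sum_{j=0}^{m_\star-1} f(\xi_j^*)\rho(\xi_j^*)\,\mathrm{d}\delta
= \sum_{j=0}^{m_\star-1}\int_{T(2j/m_\star-1)}^{T(2(j+1)/m_\star-1)} f(x)\rho(x)\,\mathrm{d}x
= \int_{-T}^{T} f(x)\rho(x)\,\mathrm{d}x,
\]
by the change of variables $x = T(2(j+\delta)/m_\star - 1)$, $\mathrm{d}x = (2T/m_\star)\,\mathrm{d}\delta$, and then telescoping the partition of $[-T,T]$. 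This is valid for every admissible $m_\star$, so averaging over $m_\star$ changes nothing.

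Next I would handle the two tail terms. The node $\xi_{\mathrm{left}}^*$ has the truncated-normal law on $(-\infty,-T]$ with density $\rho(x)/\Phi(-T)$, so $\EE[f(\xi_{\mathrm{left}}^*)] = \frac{1}{\Phi(-T)}\int_{-\infty}^{-T} f(x)\rho(x)\,\mathrm{d}x$, and multiplying by the prefactor $\Phi(-T)$ gives exactly $\int_{-\infty}^{-T} f(x)\rho(x)\,\mathrm{d}x$; symmetrically, $(1-\Phi(T))\,\EE[f(\xi_{\mathrm{right}}^*)] = \int_{T}^{\infty} f(x)\rho(x)\,\mathrm{d}x$. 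Adding the three contributions reconstitutes $\int_{\RR} f(x)\rho(x)\,\mathrm{d}x = I(f)$.

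The main technical obstacle is not the algebra but justifying that all the integrals and expectations are finite and that Fubini applies, i.e., that the termwise identities above are legitimate for $f \in \Hcal_\alpha$ rather than merely for smooth compactly supported $f$. Since $\alpha \ge 1$, $f \in \Hcal_\alpha$ embeds into a space of continuous functions (so pointwise evaluation makes sense), and $\|f\|_{L^2_\rho} < \infty$ together with Cauchy--Schwarz gives $\int_\RR |f(x)|\rho(x)\,\mathrm{d}x \le \|f\|_{L^2_\rho} < \infty$, which guarantees absolute integrability on $[-T,T]$ and on each tail; the sum over the finitely many $j$ and the finite average over $m_\star$ pose no convergence issue. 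I would state this integrability remark once at the start and then carry out the three computations above, concluding $\EE[A_{n,T}^{\Mstar,\delta}(f)] = I(f)$.
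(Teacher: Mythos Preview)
Your proposal is correct and follows essentially the same approach as the paper: split the expectation into the bulk term and the two tail terms, use the change of variables $x = T(2(j+\delta)/m_\star - 1)$ to turn the $\delta$-integral of the bulk sum into $\int_{-T}^{T} f\rho$, and use the truncated-normal marginals of $\xi_{\mathrm{left}}^*$ and $\xi_{\mathrm{right}}^*$ to recover the tail integrals. Your added remark about integrability and Fubini (via $\|f\|_{L^2_\rho}<\infty$ and Cauchy--Schwarz) is a justification the paper leaves implicit, so your version is slightly more careful but otherwise identical.
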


\begin{proof}
Substituting \eqref{eq:rand_trap} into $\EE\left[ A_{n,T}^{\Mstar,\delta}(f)\right]$, we have
\begin{align*}
    \EE\left[ A_{n,T}^{\Mstar,\delta}(f)\right] & = \EE\left[ \frac{2T}{\Mstar}\sum_{j=0}^{\Mstar-1}f(\xi_j^{*})\rho(\xi_j^{*})+\Phi(-T)f(\xi_{\mathrm{left}}^*)+(1-\Phi(T))f(\xi_{\mathrm{right}}^*)\right] \\
    & = \EE\left[\frac{2T}{\Mstar}\sum_{j=0}^{\Mstar-1} f(\xi_j^{*})\rho(\xi_j^{*})\right]\\
    & \quad +\Phi(-T)\EE\left[ f(\xi_{\mathrm{left}}^*)\right]+(1-\Phi(T))\EE\left[ f(\xi_{\mathrm{right}}^*)\right].
\end{align*}

For the first term on the rightmost side above, we have
\begin{align*}
    & \EE\left[\frac{2T}{\Mstar}\sum_{j=0}^{\Mstar-1} f(\xi_j^{*})\rho(\xi_j^{*})\right] \\
    & \quad = \frac{1}{n-1-\lfloor n/2\rfloor}\sum_{m_\star=\lfloor n/2\rfloor}^{n-2}\int_0^1 \left(\frac{2T}{m_\star}\sum_{j=0}^{m_\star-1} f(\xi_j^{*})\rho(\xi_j^{*})\right) \rd \delta \\
    & \quad = \frac{1}{n-1-\lfloor n/2\rfloor}\sum_{m_\star=\lfloor n/2\rfloor}^{n-2}\frac{2T}{m_\star}\sum_{j=0}^{m_\star-1} \int_0^1 f(\xi_j^{*})\rho(\xi_j^{*}) \rd \delta\\
    & \quad = \frac{1}{n-1-\lfloor n/2\rfloor}\sum_{m_\star=\lfloor n/2\rfloor}^{n-2}\sum_{j=0}^{m_\star-1} \int_{T(2j/m_\star-1)}^{T(2(j+1)/m_\star-1)} f(x)\rho(x) \rd x\\
    & \quad = \frac{1}{n-1-\lfloor n/2\rfloor}\sum_{m_\star=\lfloor n/2\rfloor}^{n-2} \int_{-T}^{T} f(x)\rho(x) \rd x = \int_{-T}^{T} f(x)\rho(x) \rd x,
\end{align*}
where the third equality follows from the change of variables $x=T(2(j+\delta)/m_\star-1)$ for each $j=0,\ldots,m_\star-1$. Regarding the second term, since $\xi_{\mathrm{left}}^*$ is independent of $\Mstar$, we have
\begin{align*}
    \Phi(-T)\EE\left[ f(\xi_{\mathrm{left}}^*)\right] = \int_{-\infty}^{-T}f(x)\rho(x)\rd x.
\end{align*}
In a similar way, for the third term it can be shown that
\[ (1-\Phi(T))\EE\left[ f(\xi_{\mathrm{right}}^*)\right]=\int_{T}^{\infty}f(x)\rho(x)\rd x.\]

Altogether we obtain
\begin{align*}
\EE\left[ A_{n,T}^{\Mstar,\delta}(f)\right]&=\int_{-T}^{T} f(x)\rho(x) \rd x+\int_{-\infty}^{-T}f(x)\rho(x)\rd x+\int_{T}^{\infty}f(x)\rho(x)\rd x
\\
& =\int_{-\infty}^{\infty}f(x)\rho(x)\rd x=I(f),
\end{align*}
which proves the statement.
\end{proof}

In practice, we use the approximation 
\[
\overline{A_{n,T}(f)}:=\frac{1}{r}\sum_{i=1}^{r}A_{n,T}^{\Mstar^{(i)},\delta^{(i)}}(f) \approx I(f),
\]
with $r\in\mathbb{N}$ independent random
variables
\begin{align*} 
& (\Mstar^{(1)},\delta^{(1)},\xi_{\mathrm{left}}^{*,(1)},\xi_{\mathrm{right}}^{*,(1)}),\dots,(\Mstar^{(r)},\delta^{(r)},\xi_{\mathrm{left}}^{*,(r)},\xi_{\mathrm{right}}^{*,(r)})
\\
& \qquad \qquad \in \{\lfloor n/2\rfloor, \ldots, n-2\} \times (0,1)\times (-\infty,-T]\times [T,\infty), 
\end{align*}
which can be easily generated. Lemma~\ref{lem:unbiased} yields an error bound and a practical error estimator for this algorithm.

From Lemma~\ref{lem:unbiased}, $A_{n,T}^{\Mstar^{(i)},\delta^{(i)}}(f)$, $i=1,\ldots,r$, are independent and unbiased estimators of $I(f)$.  Thus $\overline{A_{n,T}(f)}$ is also an unbiased estimator of $I(f)$, and thus
\begin{align}\label{eq:practical-error}
    \EE\left[ \left( \overline{A_{n,T}(f)}-I(f)\right)^2\right] = \frac{1}{r}\EE\left[ \left( A_{n,T}^{\Mstar,\delta}(f)-I(f)\right)^2\right],
\end{align}
and 
\begin{align}\label{eq:whole-MSE=Var}
    \EE\left[ \left( \overline{A_{n,T}(f)}-I(f)\right)^2\right] = \EE\left[ \left( \overline{A_{n,T}(f)}-\EE\left[\overline{A_{n,T}(f)}\right]\right)^2\right].
\end{align}

As we shall show in the proof of Theorem~\ref{thm:upper_bound}, with a suitable $T>0$, for any $f\in \Hcal_{\alpha}$ the mean-squared error can be bounded as 
$\EE[( A_{n,T}^{\Mstar,\delta}(f)-I(f))^2]\leq C^2_{\alpha,\lambda}\|f\|_{\alpha}^2\frac{(\ln n)^{\alpha+1/2}}{n^{2\alpha+1}}$, 
where $C_{\alpha,\lambda}>0$ is a constant depending on $\alpha$ and $\lambda$ but independent of $f$, and thus \eqref{eq:practical-error} implies the error bound
\[
\EE\left[ \left( \overline{A_{n,T}(f)}-I(f)\right)^2\right]\leq C^2_{\alpha,\lambda}\|f\|_{\alpha}^2\frac{1}{r}
\frac{(\ln n)^{\alpha+1/2}}{n^{2\alpha+1}}.
\]

From \eqref{eq:whole-MSE=Var}, the sample variance
\begin{align}\label{eq:sample_variance}
    \frac{1}{r(r-1)}\sum_{i=1}^{r}\left( A_{n,T}^{\Mstar^{(i)},\delta^{(i)}}(f)-\overline{A_{n,T}(f)}\right)^2
\end{align}
can be used as an online, unbiased mean-squared error estimator.

\subsection{A bound on the root-mean-square error}
As the second main result of this paper, we show an upper bound for the worst-case RMSE $e^{\rmse}(A_{n,T},\Hcal_{\alpha})$:
\begin{theorem}[Upper bound on the worst-case RMSE]\label{thm:upper_bound}
Let $\alpha\geq 1$ and $n\geq 4$ be given. 
Fix $\lambda\in (1/2,1)$ arbitrarily. 
Then, with
\begin{align}\label{eq:cutoff_choice}
T = \sqrt{\frac{2\alpha+1}{1-\lambda}\ln (n)},
\end{align}
$A_{n,T}$ as in Algorithm~\ref{alg:rand_trap} satisfies
\[ e^{\rmse}(A_{n,T},\Hcal_{\alpha})\leq C_{\alpha,\lambda}\frac{(\ln n)^{\alpha/2+1/4}}{n^{\alpha+1/2}},\]
where $C_{\alpha,\lambda}>0$ is a constant depending only on $\alpha$ and $\lambda$.
\end{theorem}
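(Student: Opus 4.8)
The plan rests on the unbiasedness established in Lemma~\ref{lem:unbiased}: since $\EE[A_{n,T}^{\Mstar,\delta}(f)]=I(f)$, the mean-squared error equals the variance, so it suffices to prove that $\EE[(A_{n,T}^{\Mstar,\delta}(f)-I(f))^2]\le C_{\alpha,\varepsilon}^2\|f\|_\alpha^2 (\ln n)^{\alpha+1/2}n^{-2\alpha-1}$ for every $f\in\Hcal_\alpha$, and then take the square root and the supremum over the unit ball. First I would split the error along the three-part decomposition $I(f)=\int_{-T}^T f\rho+\int_{-\infty}^{-T}f\rho+\int_T^\infty f\rho$ used inside the proof of Lemma~\ref{lem:unbiased}, writing
\[
A_{n,T}^{\Mstar,\delta}(f)-I(f)=E_{\mathrm{mid}}+E_{\mathrm{left}}+E_{\mathrm{right}},
\]
where $E_{\mathrm{mid}}:=\frac{2T}{\Mstar}\sum_{j=0}^{\Mstar-1}f(\xi_j^*)\rho(\xi_j^*)-\int_{-T}^T f\rho$, $E_{\mathrm{left}}:=\Phi(-T)f(\xi_{\mathrm{left}}^*)-\int_{-\infty}^{-T}f\rho$, and $E_{\mathrm{right}}$ analogously. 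The computations inside the proof of Lemma~\ref{lem:unbiased} show that each summand has mean zero (and $E_{\mathrm{mid}}$ even conditionally on $\Mstar$), so by the triangle inequality in $L^2(\Omega,\mu)$ it is enough to bound $\EE[E_{\mathrm{mid}}^2]$, $\EE[E_{\mathrm{left}}^2]$ and $\EE[E_{\mathrm{right}}^2]$ separately. The tail terms are easy: since $\xi_{\mathrm{left}}^*$ has density $\rho/\Phi(-T)$ on $(-\infty,-T]$, $\EE[E_{\mathrm{left}}^2]\le\Phi(-T)^2\EE[f(\xi_{\mathrm{left}}^*)^2]=\Phi(-T)\int_{-\infty}^{-T}f^2\rho\le\Phi(-T)\|f\|_\alpha^2$, and likewise for $E_{\mathrm{right}}$; the Gaussian tail bound $\Phi(-T)\le\tfrac12 e^{-T^2/2}$ with $T=\sqrt{\tfrac{2\alpha+1}{1-\varepsilon}\ln n}$ and $\varepsilon>\tfrac12$ makes this $O(n^{-2\alpha-1}\|f\|_\alpha^2)$, with room to spare.

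The heart of the matter is $\EE[E_{\mathrm{mid}}^2]$, which I would handle by conditioning on $\Mstar=m_\star$. Writing $g:=f\rho$, $h:=2T/m_\star$ and $G(\delta):=h\sum_{j=0}^{m_\star-1}g(-T+(j+\delta)h)$, so that $E_{\mathrm{mid}}=G(\delta)-\int_{-T}^T g$ given $\Mstar=m_\star$, a substitution $x=-T+(j+\delta)h$ in each summand of the Fourier coefficient $\int_0^1 G(\delta)e^{-2\pi\ri k\delta}\rd\delta$ followed by Parseval's identity on $[0,1]$ gives the exact formula $\VV_\delta[G\mid m_\star]=(2T)^2\sum_{k\in\ZZ\setminus\{0\}}|c_{m_\star k}|^2$, where $c_\ell:=\tfrac1{2T}\int_{-T}^T g(x)e^{-\ri\pi\ell x/T}\rd x$ is the $\ell$-th Fourier coefficient of $g$ on $[-T,T]$. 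Integrating by parts $\alpha$ times (valid because $f\in\Hcal_\alpha$ makes $g$ of class $C^{\alpha-1}$ on $[-T,T]$ with $g^{(\alpha)}\in L^1$) expresses $c_\ell$ as a sum of boundary terms of size at most a constant times $\frac{T^\tau}{|\ell|^\tau}\bigl(|g^{(\tau-1)}(T)|+|g^{(\tau-1)}(-T)|\bigr)$, $\tau=1,\dots,\alpha$, plus an interior term $\frac{T^\alpha}{(\ri\pi\ell)^\alpha}\int_{-T}^T g^{(\alpha)}(x)e^{-\ri\pi\ell x/T}\rd x$. For the boundary part I would use the pointwise bound $|f^{(\tau-1)}(x)|\le C_\alpha e^{x^2/4+|x|/2}\|f\|_\alpha$ for $\tau-1\le\alpha-1$ (one-dimensional Sobolev embedding on a unit interval centred at $x$, using $\|f^{(\tau-1)}\|_{\Hcal_1}\le\|f\|_\alpha$), which, combined with the Hermite-polynomial form of $\rho^{(j)}$, gives $|g^{(\tau-1)}(\pm T)|\le C_\alpha (1+T)^{\alpha}e^{-T^2/4+T/2}\|f\|_\alpha$. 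Summing the convergent series $\sum_{k\ne0}|k|^{-2\tau}$ then bounds the boundary part of $\VV_\delta[G\mid m_\star]$ by a constant times $e^{-T^2/2}\times(\text{sub-polynomial in }n)\times m_\star^{-2}\|f\|_\alpha^2$, which by the choice of $T$ (and $\varepsilon>\tfrac12$ \emph{strictly}) is $O(n^{-2\alpha-3}\|f\|_\alpha^2)$, uniformly in $m_\star$.

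It remains to control the interior part, and this is where the randomization of $\Mstar$ — the Bakhvalov idea — does the real work, and which I expect to be the main obstacle. The interior part of $\VV_\delta[G\mid m_\star]$ is, up to a constant, $T^{2\alpha}m_\star^{-2\alpha}\sum_{k\ne0}|k|^{-2\alpha}|\widehat{g^{(\alpha)}}(m_\star k)|^2$, where $\widehat{g^{(\alpha)}}(\ell):=\int_{-T}^T g^{(\alpha)}(x)e^{-\ri\pi\ell x/T}\rd x$ and Parseval gives $\sum_{\ell\in\ZZ}|\widehat{g^{(\alpha)}}(\ell)|^2=2T\int_{-T}^T|g^{(\alpha)}|^2\le 2T\,C_\alpha\|f\|_\alpha^2$; the last bound follows from $\int_{\RR}|(f\rho)^{(\alpha)}|^2\le C_\alpha\|f\|_\alpha^2$, which in turn comes from Leibniz's rule, the identity $\rho^{(j)}=(-1)^jH_j\rho$ with $H_j$ the Hermite polynomials, the Cauchy--Schwarz inequality, and $\sup_{x\in\RR}\rho(x)P(x)<\infty$ for any polynomial $P$. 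For a single $m_\star$ one can only estimate $\sum_{k\ne0}|k|^{-2\alpha}|\widehat{g^{(\alpha)}}(m_\star k)|^2\le\sum_{\ell}|\widehat{g^{(\alpha)}}(\ell)|^2$ (the Fourier mass of $g^{(\alpha)}$ may concentrate near frequency $m_\star$), which yields only the suboptimal rate $\VV_\delta[G\mid m_\star]=O((\ln n)^{\alpha-1/2}n^{-2\alpha})$. However, averaging over $m_\star\in\{\lfloor n/2\rfloor,\dots,n-2\}$ (a set of $\Theta(n)$ elements, each of size $\Theta(n)$) and reindexing the double sum by $\ell=m_\star k$, every $\ell$ receives a total weight $\sum_{k\,:\,\ell/k\in\{\lfloor n/2\rfloor,\dots,n-2\}}|k|^{-2\alpha}\le\sum_{k\ge1}k^{-2\alpha}=\zeta(2\alpha)<\infty$ (distinct positive integers $k$, and $\alpha\ge1$), so that
\[
\frac1{\#\{m_\star\}}\sum_{m_\star}\sum_{k\ne0}\frac{|\widehat{g^{(\alpha)}}(m_\star k)|^2}{|k|^{2\alpha}}\le\frac{\zeta(2\alpha)}{\#\{m_\star\}}\sum_{\ell\in\ZZ}|\widehat{g^{(\alpha)}}(\ell)|^2\le C_\alpha\,\frac{T}{n}\,\|f\|_\alpha^2 .
\]
Together with $m_\star=\Theta(n)$ this bounds the $\Mstar$-averaged interior part by a constant times $T^{2\alpha}n^{-2\alpha}\cdot Tn^{-1}\|f\|_\alpha^2=T^{2\alpha+1}n^{-2\alpha-1}\|f\|_\alpha^2$, and with $T=\sqrt{\tfrac{2\alpha+1}{1-\varepsilon}\ln n}$ this equals $C_{\alpha,\varepsilon}^2(\ln n)^{\alpha+1/2}n^{-2\alpha-1}\|f\|_\alpha^2$ — precisely the target for $\EE[E_{\mathrm{mid}}^2]$. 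Adding the (much smaller) boundary and tail contributions, taking the supremum over $\|f\|_\alpha\le1$ and the square root yields $e^{\rmse}(A_{n,T},\Hcal_\alpha)\le C_{\alpha,\varepsilon}(\ln n)^{\alpha/2+1/4}n^{-\alpha-1/2}$. Besides the $\Mstar$-averaging step, the only delicate inputs are the uniform pointwise Sobolev bounds on $f^{(\tau)}$ near $\pm T$ and the weighted estimate $\int_{\RR}|(f\rho)^{(\alpha)}|^2\le C_\alpha\|f\|_\alpha^2$; both are routine given one-dimensional Sobolev embedding and the explicit Hermite form of the derivatives of $\rho$.
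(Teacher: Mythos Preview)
Your proof is correct, and it follows a genuinely different route for the dominant middle term $E_{\mathrm{mid}}$ than the paper does.

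The paper handles the middle error by introducing the Bernoulli--polynomial periodization $G$ of $F=f\rho$ (Lemma~\ref{lem:periodize}) and splitting $E_{\mathrm{mid}}$ into an $(F-G)$ part and a $G$ part. The $(F-G)$ part is controlled via the decay norm $\|F\|_{\alpha,\varepsilon,\mathrm{decay}}$ and the norm inequality Lemma~\ref{lem:KSG} imported from \cite{KSG2022}; the $G$ part is periodic with matching boundary values up to order $\alpha-1$, so its Fourier coefficients decay like $|k|^{-\alpha}$ automatically, and the $\Mstar$-average is bounded via the divisor-counting estimate $\tau_{\lfloor n/2\rfloor}^{n-2}(k)\le |k|/(n-2)+1$. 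You instead integrate by parts directly on the Fourier coefficients of $f\rho$ on $[-T,T]$, splitting each $c_\ell$ into boundary and interior pieces. Your boundary piece plays the role of the paper's $(F-G)$ term, and your hands-on Sobolev embedding $|f^{(\tau)}(x)|\le C_\alpha e^{x^2/4+O(|x|)}\|f\|_\alpha$ replaces the cited Lemma~\ref{lem:KSG}; in fact your pointwise decay $e^{-T^2/4}$ for $(f\rho)^{(\tau)}(\pm T)$ is slightly sharper than the paper's $e^{-(1-\varepsilon)T^2/2}$. Your reindexing argument for the $\Mstar$-average, bounding the total weight on each frequency $\ell$ by $\zeta(2\alpha)$, is equivalent in effect to the paper's divisor bound but arguably cleaner. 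The trade-off: your approach is more self-contained and avoids the Bernoulli machinery entirely, while the paper's approach recycles the deterministic framework of \cite{KSG2022} and makes the orthogonal-projection structure (Lemma~\ref{lem:periodize}.3) explicit. The tail estimates and the final assembly are essentially identical in both.
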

This rate is optimal up to a logarithmic factor in the sense of the worst-case RMSE; indeed, Theorem~\ref{thm:lower_bound} shows that the optimal rate is $\Ocal(n^{-\alpha-1/2})$. To prove this theorem, first we need some preparations.

\begin{remark}
    In Theorem~\ref{thm:upper_bound}, the 
    parameter $\lambda\in(1/2,1)$ does not affect the convergence rate, including the logarithmic factor. However, it does change the constant $C_{\alpha,\lambda}$.  
    This change is caused by changes of (i) the choice of the cut-off parameter $T$ and (ii) the constant $C_{\alpha,\lambda}^*$, both of which depend on $\lambda$; see \eqref{eq:explicitconstant1} and \eqref{eq:explicitconstant2} below.
    For increasing $\lambda$, $T$ increases but $C_{\alpha,\lambda}^*$  decreases.  
    It is not very straightforward to find the optimal $\lambda$ minimizing the constant, and this is out of the scope of the present paper.
\end{remark}

\subsubsection{Auxiliary results}
Because our algorithm is based on the trapezoidal rule yet the integrand is not assumed to be periodic, error analysis requires extra work.

First, we introduce the following notations 
for an $\alpha$-times weakly differentiable function $F\colon \mathbb{R}\to \mathbb{R}$. 
Let $F^{(\tau)}$ denote its $\tau$-th weak derivative for $\tau=0,\dots,\alpha$. Define 
\[ \|F\|_{\alpha}^* := \sup_{\substack{I\subset \RR\\ |I|<\infty}}\|F\|_{\alpha,I}^* \]
with
\[ \|F\|_{\alpha,I}^*:=\left( \sum_{\tau=0}^{\alpha-1}\left(\int_{I}F^{(\tau)}(x)\rd x \right)^2+\int_{I}|F^{(\alpha)}(x)|^2\rd x\right)^{1/2}\]
and for $\lambda\in (1/2,1)$, 
\[ \|F\|_{\alpha,\lambda,\mathrm{decay}} := \sup_{x\in \RR, \tau\in \{0,\ldots,\alpha-1\}}\left| e^{(1-\lambda)x^2/2}F^{(\tau)}(x)\right|,\]
allowing this quantity to be infinite. It turns out that $f\in \Hcal_{\alpha}$ implies $ \|f\rho\|_{\alpha}^*<\infty$, and $\|f\rho\|_{\alpha,\lambda,\mathrm{decay}}<\infty$, as we showed in \cite[Proof of Theorem~4.5]{KSG2022}.

\begin{lemma}[Norm inequality]\label{lem:KSG}
Let $\alpha\geq 1$ and $\lambda\in (1/2,1)$ be given. For $f\in \Hcal_{\alpha}$, let $F(x):=f(x)\rho(x)$. Then, we have
\[ \|F\|_{\alpha}^*+\|F\|_{\alpha,\lambda,\mathrm{decay}}\leq C^*_{\alpha,\lambda}\|f\|_{\alpha} \]
with a constant $C^*_{\alpha,\lambda}>0$ depending on $\alpha$ and $\lambda$ but independent of $f$.
\end{lemma}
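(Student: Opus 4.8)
The plan is to bound the two constituent quantities $\|F\|_\alpha^*$ and $\|F\|_{\alpha,\varepsilon,\mathrm{decay}}$ separately, in each case by expanding $F^{(\tau)} = (f\rho)^{(\tau)}$ with the Leibniz rule and using that the derivatives of the Gaussian weight $\rho$ are Hermite-polynomial multiples of $\rho$ itself, i.e. $\rho^{(k)}(x) = (-1)^k H_k(x)\rho(x)$ with $H_k$ the (probabilists') Hermite polynomial of degree $k$. Concretely, for each $\tau\in\{0,\dots,\alpha\}$,
\[
F^{(\tau)}(x) = \sum_{k=0}^{\tau}\binom{\tau}{k} f^{(\tau-k)}(x)\,\rho^{(k)}(x)
= \rho(x)\sum_{k=0}^{\tau}\binom{\tau}{k}(-1)^k H_k(x)\,f^{(\tau-k)}(x).
\]
So the whole lemma reduces to controlling, for each fixed polynomial $p$ of degree $\le \alpha$ and each $\sigma\le\alpha$, the quantities $\sup_{|I|<\infty}\bigl|\int_I p(x)f^{(\sigma)}(x)\rho(x)\rd x\bigr|$, $\bigl(\int_{\RR}|p(x)f^{(\sigma)}(x)\rho(x)|^2\rd x\bigr)^{1/2}$, and $\sup_x|e^{(1-\varepsilon)x^2/2}p(x)f^{(\sigma)}(x)\rho(x)|$, all in terms of $\|f\|_\alpha$.

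For the $\|F\|_{\alpha,\varepsilon,\mathrm{decay}}$ piece I would argue pointwise: $e^{(1-\varepsilon)x^2/2}\rho(x) = (2\pi)^{-1/2}e^{-\varepsilon x^2/2}$, so for $\tau\le\alpha-1$ one needs $\sup_x|p(x)e^{-\varepsilon x^2/2}f^{(\sigma)}(x)|$ with $\deg p\le\alpha-1$ and $\sigma\le\alpha-1$. Since $|p(x)e^{-\varepsilon x^2/2}|$ is bounded and since $\varepsilon>1/2$ leaves room, I would use a Gagliardo–Nirenberg / Sobolev-embedding-type bound on $\RR$ with the Gaussian weight — more elementarily, writing $g:=e^{-\delta x^2/2}f^{(\sigma)}$ for a small $\delta\in(0,2\varepsilon-1)$ and using $|g(x)|^2 = -\int_x^\infty (g^2)'(t)\rd t = -2\int_x^\infty g(t)g'(t)\rd t \le \|g\|_{L^2(\RR)}^2 + \|g'\|_{L^2(\RR)}^2$, and then observing that $\|e^{-\delta x^2/2}f^{(\sigma)}\|_{L^2}$ and its derivative are controlled by $\|f^{(\sigma)}\|_{L^2_\rho}$ and $\|f^{(\sigma+1)}\|_{L^2_\rho}$ (since $e^{-\delta x^2}\le C_\delta e^{-x^2/2}\cdot(\text{poly})^{-1}$... actually $e^{-\delta x^2}\rho(x)^{-1} = (2\pi)^{1/2}e^{(1/2-\delta)x^2}$ is not bounded, so instead one keeps an extra Gaussian factor: $e^{-\delta x^2/2}|h(x)| \le e^{-(\delta-1/2)x^2/2}\rho(x)^{1/2}|h(x)|$ with $\delta>1/2$... ). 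The cleanest route is: choose $\delta$ with $1/2<\delta<\varepsilon$, bound $|p(x)|e^{-(\varepsilon-\delta)x^2/2}\le C_{\alpha,\varepsilon}$, and then bound $\sup_x e^{-\delta x^2/2}|f^{(\sigma)}(x)|$ by the above $L^2$-trick applied with weight $e^{-\delta x^2/2}$, whose square-times-$\rho^{-1}$ is $e^{-(\delta-1/2)x^2}$ up to a constant, hence in $L^1$ and dominated so that $\int (e^{-\delta x^2/2}|f^{(\sigma)}|)^2\le C\int |f^{(\sigma)}|^2\rho$ and likewise after differentiating (the extra $x e^{-\delta x^2/2}$ term is still dominated by $\rho^{1/2}$ times a bounded factor). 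This gives $\|F\|_{\alpha,\varepsilon,\mathrm{decay}}\le C_{\alpha,\varepsilon}\|f\|_\alpha$.

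For the $\|F\|_\alpha^*$ piece: the $\int_I|F^{(\alpha)}|^2$ term expands into a sum of $\int_I|p(x)f^{(\sigma)}(x)\rho(x)|^2\rd x \le \sup_x|p(x)^2\rho(x)|\cdot\int_\RR|f^{(\sigma)}(x)|\,|\rho(x)f^{(\sigma)}(x)|\rd x$ — better: $|p\rho|\le C_\alpha\rho^{1/2}$ pointwise (polynomial times $e^{-x^2/4}$ is bounded), so $\int_I|p f^{(\sigma)}\rho|^2 \le C_\alpha\int_\RR|f^{(\sigma)}|^2\rho = C_\alpha\|f^{(\sigma)}\|_{L^2_\rho}^2\le C_\alpha\|f\|_\alpha^2$. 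The genuinely delicate term is $\bigl(\int_I F^{(\tau)}\rd x\bigr)^2$ for $\tau\le\alpha-1$, uniformly over all finite intervals $I=(a,b)$. Here I would exploit that $\int_a^b F^{(\tau)} = F^{(\tau-1)}(b)-F^{(\tau-1)}(a)$, reducing it to $\sup_x|F^{(\tau-1)}(x)|$ for $\tau-1\le\alpha-2\le\alpha-1$ — which is exactly (a sub-case of) the decay bound already established, at $\varepsilon$ replaced by anything positive; so $|\int_I F^{(\tau)}\rd x|\le 2\sup_x|F^{(\tau-1)}(x)|\le C_{\alpha,\varepsilon}\|f\|_\alpha$. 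This is the step where the structure of the $\|\cdot\|_\alpha^*$ norm — finitely many "interval-integral" moments plus an $L^2$ top derivative, exactly matching a Bernoulli-polynomial Euler–Maclaurin estimate — pays off. Summing the $O(\alpha^2)$ contributions with binomial and Hermite-coefficient constants absorbed into $C_{\alpha,\varepsilon}^*$ completes the proof; the reference \cite[Proof of Theorem~4.5]{KSG2022} can be invoked for the finiteness assertions and for the routine constant-tracking.

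The main obstacle is the uniform-over-$I$ control of $\bigl(\int_I F^{(\tau)}\rd x\bigr)^2$: one must resist bounding it by $|I|\int_I|F^{(\tau)}|^2$ (which blows up as $|I|\to\infty$) and instead telescope to a boundary-value sup, thereby reducing it to the pointwise Gaussian-decay estimate. Everything else is bookkeeping with the Leibniz rule and the elementary fact that a polynomial times $e^{-cx^2}$ ($c>0$) is bounded.
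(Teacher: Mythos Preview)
The paper does not actually prove this lemma; it simply records it as a consequence of \cite[Proof of Theorem~4.5]{KSG2022} and moves on. Your argument is therefore more explicit than what the present paper offers, and it is correct in outline: the Leibniz expansion together with $\rho^{(k)} = (-1)^k H_k\rho$ reduces everything to polynomial-times-Gaussian weights against $f^{(\sigma)}$; the decay norm is handled by an $H^1(\RR)\hookrightarrow L^\infty$ argument applied to $g=e^{-\delta x^2/2}f^{(\sigma)}$ with $1/2<\delta<\varepsilon$ (this is exactly where $\varepsilon>1/2$ is used, so that $e^{-\delta x^2}\lesssim\rho$); and the interval-integral terms in $\|\cdot\|_\alpha^*$ reduce via the fundamental theorem of calculus to the pointwise decay bound already established.

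One small gap: your telescoping $\int_a^b F^{(\tau)} = F^{(\tau-1)}(b)-F^{(\tau-1)}(a)$ only makes sense for $\tau\ge 1$. For $\tau=0$ you need a separate (and easier) step, e.g.\ $\bigl|\int_I F\bigr| = \bigl|\int_I f\rho\bigr| \le \|f\|_{L_{\rho}^2}$ by Cauchy--Schwarz in $L_{\rho}^2(\RR)$, uniformly in $I$. With that patched, the argument is complete.
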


Moreover, following \cite{KSG2022} we use the following periodization \eqref{eq:periodize} as an auxiliary variable. We refer to \cite[Lemma~4.1]{KSG2022} for a proof.
\begin{lemma}[Auxiliary periodic function and its properties]\label{lem:periodize}
Let $\alpha\geq 1$ and $T>0$ be given. For $f\in \Hcal_{\alpha}$, let $F(x):=f(x)\rho(x)$ and define $G: [-T-d, T+d]\to \RR$ by
\begin{align}\label{eq:periodize}
    G(x):=F(x)-\sum_{\tau=1}^{\alpha}\frac{B^{[-T,T]}_{\tau}(x)}{\tau!}\left( \int_{-T}^{T}F^{(\tau)}(y)\rd y\right),
\end{align}
for $x\in [-T-d,T+d],$ with an arbitrarily small $d>0$, where $B^{[-T,T]}_{\tau}$ denotes the scaled Bernoulli polynomial of degree $\tau$ on $[-T,T]$, i.e.,
\[ B^{[-T,T]}_{\tau}(x)=(2T)^{\tau-1}B_{\tau}\left( \frac{x+T}{2T}\right)\]
with $B_{\tau}$ being the standard Bernoulli polynomial of degree $\tau$. Then, the following holds:
\begin{enumerate}
    \item The function $G$ preserves the integral of $F$ on $[-T,T]$:
    \[\int_{-T}^{T}G(x)\rd x = \int_{-T}^{T}F(x)\rd x = \int_{-T}^{T}f(x)\rho(x)\rd x.\]
    \item The function $G$ is $(\alpha-1)$-times continuously differentiable on $(-T-d,T+d)$ with $G^{(\alpha-1)}$ being absolutely continuous on $[-T,T]$, and satisfies $G^{(\tau)}(-T)=G^{(\tau)}(T)$ for all $\tau=0,\ldots,\alpha-1$.
    \item The norm of $G$ is bounded above by that of $F$:
    \begin{equation}\label{eq:GFnormbound} \|G\|_{\alpha,[-T,T]}^*\leq \|F\|_{\alpha,[-T,T]}^* \;,
    \end{equation}
    where we note that $\|G\|_{\alpha,[-T,T]}^*$ is well defined, since the continuous differentiability and the absolute continuity above implies the weak differentiability of $G^{(\tau)}$ on $[-T,T]$ for all $\tau\leq \alpha-1$.
\end{enumerate}
\end{lemma}
The functions defined above will be used to analyze the integration error on $[-T,T]$.
In the proof, we decompose the integrand $f(x)\rho(x)$ into two parts; the periodic part which is represented by $G(x)$, and the non-periodic part. We use $\|f\rho\|_{\alpha}^*$ to control the integration error of the periodic part, while the error of the non-periodic part is controlled by $\|f\rho\|_{\alpha,\lambda,\mathrm{decay}}$, as can be seen in the proof of Lemma~\ref{lem:proof_1st-term} below. The cut-off parameter $T$ in \eqref{eq:cutoff_choice} is given in such a way that these two errors are balanced.
To control the error on the tail regions $(-\infty,-T]$ and $[T,\infty)$,  we only need $\|f\|_{L_\rho ^2}$, as shown in the proof of Lemma~\ref{lem:proof_other-terms} below.  
\begin{remark}
The method of periodizing a function by adding Bernoulli polynomials in \eqref{eq:periodize} was first introduced by Korobov~\cite{Ko1963}, and also studied by Zaremba~\cite{Za1972}, in the context of numerical integration. This method was originally proposed for numerically integrating non-periodic functions on a multidimensional box. This periodization is referred to as Bernoulli polynomial method in \cite[Section~2.12]{SJ1994}. Recently, a new proof strategy was developed for a class of quasi-Monte Carlo methods in \cite{NS2021}, where this periodized function was used as an auxiliary function only appearing in a proof, but not in the algorithm. In particular, \cite{NS2021} showed that $G(x)$ is an orthogonal projection of $F(x)$ in a suitable sense, from one kind of Sobolev space to a corresponding periodic Sobolev space, in a multidimensional setting. This implies the norm inequality shown in Lemma~\ref{lem:periodize}.3. In \cite{KSG2022} the authors applied this proof strategy for the Gaussian Sobolev spaces.     
\end{remark}

\subsubsection{Proof of Theorem~\ref{thm:upper_bound}}

Let us prove Theorem \ref{thm:upper_bound}. It follows from the proof of Lemma~\ref{lem:unbiased} and Jensen's inequality that, for any $f\in \Hcal_{\alpha}$, the mean-squared error of $A_{n,T}(f)$ is bounded above as
\begin{align}
    & \EE\left[ \left( A_{n,T}^{\Mstar,\delta}(f)-I(f)\right)^2\right] \notag \\
    & \quad = \EE\left[ \left(\frac{2T}{\Mstar}\sum_{j=0}^{\Mstar-1}f(\xi_j^*)\rho(\xi_j^*)+\Phi(-T)f(\xi_{\mathrm{left}}^*)+(1-\Phi(T))f(\xi_{\mathrm{right}}^*) \right.\right. \notag \\
    & \quad \qquad \qquad \left.\left.-\int_{-T}^{T}f(x)\rho(x)\rd x-\int_{-\infty}^{-T}f(x)\rho(x)\rd x-\int_{T}^{\infty}f(x)\rho(x)\rd x\right)^2\right] \notag \\
    & \quad \leq 3\, \EE\left[ \left(\frac{2T}{\Mstar}\sum_{j=0}^{\Mstar-1}f(\xi_j^*)\rho(\xi_j^*)-\int_{-T}^{T}f(x)\rho(x)\rd x\right)^2\right]  \notag \\ 
    & \quad \quad + 3\, \EE\left[ \left(\Phi(-T)f(\xi_{\mathrm{left}}^*)-\int_{-\infty}^{-T}f(x)\rho(x)\rd x\right)^2\right] \label{eq:whole-decomp} \\
    & \quad \quad +3\, \EE\left[ \left((1-\Phi(T))f(\xi_{\mathrm{right}}^*)-\int_{T}^{\infty}f(x)\rho(x)\rd x\right)^2\right]\notag.
\end{align}
Thus it suffices to give an upper bound on each term of \eqref{eq:whole-decomp}. In Lemma~\ref{lem:proof_1st-term} below, we show an upper bound on the first term. And then in Lemma~\ref{lem:proof_other-terms}, we show upper bounds on the second and third terms.

\begin{lemma}[{Upper bound on the RMSE for $[-T,T]$}]\label{lem:proof_1st-term}
Let $\alpha\geq 1$, $n\geq 4$, and $T>0$ be given.
Fix $\lambda\in (1/2,1)$ arbitrarily. 
For any $f\in \Hcal_{\alpha}$, the first term of \eqref{eq:whole-decomp} is bounded as
\begin{align*}
    & \EE\left[ \left(\frac{2T}{\Mstar}\sum_{j=0}^{\Mstar-1}f(\xi_j^*)\rho(\xi_j^*)-\int_{-T}^{T}f(x)\rho(x)\rd x\right)^2\right] \\
    & \quad \leq  (C^*_{\alpha,\lambda})^2\|f\|_{\alpha}^2\left(  \frac{2\alpha^2\max\{1, (2T)^{2\alpha}\}}{e^{(1-\lambda)T^2}}+ \frac{2^{2\alpha+5}T^{2\alpha+1}}{\pi^{2\alpha}n^{2\alpha+1}}\right),
\end{align*}
with $C^*_{\alpha,\lambda}>0$ being the same constant as  in Lemma~\ref{lem:KSG}.
\end{lemma}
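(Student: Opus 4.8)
The goal is to bound the mean-squared error of the interior trapezoidal sum against $\int_{-T}^T f\rho$. The plan is to introduce, for each realization $m_\star$ of $\Mstar$, the auxiliary periodic function $G = G_{m_\star}$ of Lemma~\ref{lem:periodize} built on $[-T,T]$ (the construction does not depend on the number of nodes, so in fact $G$ is the same for every $m_\star$), and to split the error into a "periodic part'' and a "correction part''. Concretely, write $F = f\rho$ and
\[
\frac{2T}{\Mstar}\sum_{j=0}^{\Mstar-1}F(\xi_j^*) - \int_{-T}^T F(x)\rd x
= \left(\frac{2T}{\Mstar}\sum_{j=0}^{\Mstar-1}G(\xi_j^*) - \int_{-T}^T G(x)\rd x\right)
+ \frac{2T}{\Mstar}\sum_{j=0}^{\Mstar-1}\bigl(F-G\bigr)(\xi_j^*),
\]
where the second term uses $\int_{-T}^T (F-G) = 0$ from Lemma~\ref{lem:periodize}.1, and $F-G$ is the explicit Bernoulli-polynomial sum $\sum_{\tau=1}^\alpha \frac{B^{[-T,T]}_\tau(x)}{\tau!}\bigl(\int_{-T}^T F^{(\tau)}\bigr)$.

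**Handling the two parts.** For the periodic part, I would fix $m_\star$ and use the classical error identity for the randomly-shifted (here: one random offset $\delta\in(0,1)$ scaled appropriately, giving nodes equispaced with a uniform shift) trapezoidal rule applied to a function that is $C^{\alpha-1}$-periodic on $[-T,T]$ with absolutely continuous $(\alpha-1)$-st derivative. Taking the expectation over $\delta$ only, the mean-squared quadrature error of the $m_\star$-point rule on $[-T,T]$ is controlled — via Parseval on the interval, as in \cite{KSG2022,NS2021} — by $\|G\|^*_{\alpha,[-T,T]}$ times a factor $(2T)^{2\alpha+1}/m_\star^{2\alpha+1}$ up to an explicit constant (the $\pi^{-2\alpha}$ coming from the Fourier coefficients of the Bernoulli kernel); then Lemma~\ref{lem:periodize}.3 replaces $\|G\|^*$ by $\|F\|^*_{\alpha,[-T,T]}\le\|F\|^*_\alpha$. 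Since $m_\star\ge\lfloor n/2\rfloor\ge n/2 - 1$, and $n\ge 4$, one can absorb this into $n^{-2\alpha-1}$ at the cost of an explicit power of two — this is the source of the $2^{2\alpha+5}$ and $\pi^{-2\alpha}$ in the stated bound. For the correction part, note each $\xi_j^*$ lies in $[-T,T]$ and $|B^{[-T,T]}_\tau(x)|\le (2T)^{\tau-1}\max_{[0,1]}|B_\tau|$; using $\bigl|\int_{-T}^T F^{(\tau)}\bigr|\le\|F\|^*_\alpha$ for $\tau\le\alpha-1$, and for $\tau=\alpha$ that $\int_{-T}^T F^{(\alpha)} = F^{(\alpha-1)}(T)-F^{(\alpha-1)}(-T)$, which decays like $\|F\|_{\alpha,\varepsilon,\mathrm{decay}}\,e^{-(1-\varepsilon)T^2/2}$, one bounds $|(F-G)(\xi_j^*)|$ pointwise. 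Multiplying by $\frac{2T}{\Mstar}\cdot\Mstar = 2T$ and squaring produces a term of the shape $\alpha^2\max\{1,(2T)^{2\alpha}\}e^{-(1-\varepsilon)T^2}$ — matching the first bracket in the lemma. Finally apply $(a+b)^2\le 2a^2+2b^2$ and Lemma~\ref{lem:KSG} to convert $\|F\|^*_\alpha + \|F\|_{\alpha,\varepsilon,\mathrm{decay}}$ into $C^*_{\alpha,\varepsilon}\|f\|_\alpha$, then take $\EE_{\Mstar}$ (a finite average, harmless since the bound is uniform in $m_\star$ once $m_\star\ge n/2-1$).

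**The main obstacle.** The delicate point is the periodic-part estimate: one must be careful that $G$ depends on $T$ (and on $F$) but not on the realization $m_\star$, so that the periodicity hypotheses of Lemma~\ref{lem:periodize}.2 hold for every term in the average over $\Mstar$, and then one needs the right quantitative form of the randomized trapezoidal error on a general interval $[-T,T]$ for periodic Sobolev functions — essentially a scaled version of the identity that the expected square error equals $\sum_{k\neq 0}|\widehat{G}(k)|^2$ over the aliased frequencies, bounded by the Bernoulli-kernel Fourier decay. Getting the constant to come out as $2^{2\alpha+5}\pi^{-2\alpha}$ requires tracking the interval-length scaling $(2T)$, the lower bound $m_\star\ge n/2-1\ge n/4$ (valid for $n\ge4$), and the Parseval step precisely; this bookkeeping, rather than any conceptual difficulty, is where the work lies. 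The tail/correction estimate and the application of Lemmas~\ref{lem:KSG}–\ref{lem:periodize} are then routine.
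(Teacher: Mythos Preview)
Your decomposition into a periodic part (via $G$) and the Bernoulli correction $F-G$ matches the paper. For the correction term, however, note that for \emph{every} $\tau=1,\dots,\alpha$ one should write $\int_{-T}^T F^{(\tau)}=F^{(\tau-1)}(T)-F^{(\tau-1)}(-T)$ and bound this via $\|F\|_{\alpha,\varepsilon,\mathrm{decay}}\,e^{-(1-\varepsilon)T^2/2}$; your alternative $\bigl|\int_{-T}^T F^{(\tau)}\bigr|\le\|F\|^*_\alpha$ for $\tau\le\alpha-1$ carries no exponential decay in $T$, so the squared correction would not be dominated by $e^{-(1-\varepsilon)T^2}$ as the lemma requires. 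This is easily repaired.

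The real gap is in the periodic part. You assert that taking the expectation over $\delta$ alone controls the mean-squared error by a constant times $(2T)^{2\alpha+1}/m_\star^{2\alpha+1}$, after which the average over $\Mstar$ is ``harmless''. This is the wrong way round. For fixed $m_\star$ the aliasing identity and Parseval give
\[
\EE_\delta\Bigl[\Bigl(\tfrac{2T}{m_\star}\sum_{j} G(\xi_j^*)-\int_{-T}^T G\Bigr)^2\Bigr]
=2T\sum_{\ell\ne0}\bigl|\widehat G(\ell m_\star)\bigr|^2
\le \frac{(2T)^{2\alpha+1}}{(2\pi m_\star)^{2\alpha}}\,\|G^{(\alpha)}\|_{L^2[-T,T]}^2,
\]
i.e.\ rate $m_\star^{-2\alpha}$, not $m_\star^{-2\alpha-1}$; and this is sharp in the worst case (put all the mass of $G$ on the $\pm m_\star$-th Fourier modes). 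The paper remarks on precisely this point after the proof. The missing factor of $n^{-1}$ comes \emph{only} from the $\Mstar$-randomization: after swapping the order of summation one obtains
\[
\frac{2T}{n-1-\lfloor n/2\rfloor}\sum_{k\ne0}|\widehat G(k)|^2\,\tau_{\lfloor n/2\rfloor}^{n-2}(k),
\]
where $\tau_{\lfloor n/2\rfloor}^{n-2}(k)$ counts the divisors of $k$ lying in $\{\lfloor n/2\rfloor,\dots,n-2\}$; one then shows $\sup_{k\ne0}\tau_{\lfloor n/2\rfloor}^{n-2}(k)/k^{2\alpha}\le 2^{2\alpha+1}n^{-2\alpha}$, and the prefactor $(n-1-\lfloor n/2\rfloor)^{-1}$ contributes the extra order $n^{-1}$. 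This divisor-counting step is the core of the argument and is absent from your sketch.
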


\begin{proof}
Let us consider the auxiliary periodic function $G: [-T,T]\to \RR$ defined in \eqref{eq:periodize}. 
From Lemma~\ref{lem:periodize}.1, we have
\begin{align}
    & \EE\left[ \left(\frac{2T}{\Mstar}\sum_{j=0}^{\Mstar-1}f(\xi_j^*)\rho(\xi_j^*)-\int_{-T}^{T}f(x)\rho(x)\rd x\right)^2\right] \notag \\
    & = \EE\left[ \left(\frac{2T}{\Mstar}\sum_{j=0}^{\Mstar-1}\left(F(\xi_j^*)-G(\xi_j^*)\right)+\frac{2T}{\Mstar}\sum_{j=0}^{\Mstar-1}G(\xi_j^*)-\int_{-T}^{T}G(x)\rd x\right)^2\right] \notag \\
    & \leq 2\,\EE\left[ \left(\frac{2T}{\Mstar}\sum_{j=0}^{\Mstar-1}\left(F(\xi_j^*)-G(\xi_j^*)\right)\right)^2\right] \notag\\
    & \quad +2\,\EE\left[\left(\frac{2T}{\Mstar}\sum_{j=0}^{\Mstar-1}G(\xi_j^*)-\int_{-T}^{T}G(x)\rd x\right)^2\right]. \label{eq:1st-term_bound}
\end{align}

For the first term of \eqref{eq:1st-term_bound}, it follows from the periodization \eqref{eq:periodize} that, for both $\Mstar$ and $\delta$ given,
\begin{align}
        & \left(\frac{2T}{\Mstar}\sum_{j=0}^{\Mstar-1}\left(F(\xi_j^*)-G(\xi_j^*)\right)\right)^2 \notag \\
        & \quad = 4T^2\left(\frac{1}{\Mstar}\sum_{j=0}^{\Mstar-1}\sum_{\tau=1}^{\alpha}\frac{B^{[-T,T]}_{\tau}(\xi_j^*)}{\tau!}\left( \int_{-T}^{T}F^{(\tau)}(y)\rd y\right)\right)^2 \notag \\
        & \quad \leq 4T^2\left(\sum_{\tau=1}^{\alpha}\frac{(2T)^{\tau-1}}{2}\left| F^{(\tau-1)}(T)-F^{(\tau-1)}(-T) \right|\right)^2 \notag \\
        & \quad \leq \left(\alpha \max\{1, (2T)^{\alpha}\}\|F\|_{\alpha,\lambda,\mathrm{decay}} \; e^{-(1-\lambda)T^2/2}\right)^2 \notag \\
        & \quad \leq \alpha^2(C^*_{\alpha,\lambda})^2\|f\|_{\alpha}^2 \max\{1, (2T)^{2\alpha}\} \; e^{-(1-\lambda)T^2},\label{eq:bound_on_1st_1st_term}
    \end{align}
where we used $|B^{[-T,T]}_{\tau}(x)/\tau!|\leq (2T)^{\tau-1}/2$ for $x\in [-T,T]$, shown in \cite{L1940}, to get the first inequality, and applied Lemma~\ref{lem:KSG} in the last inequality. Since the bound  \eqref{eq:bound_on_1st_1st_term} does not depend on the realization of $\Mstar(\omega)$ or on that of $\delta(\omega)$, the first term of \eqref{eq:1st-term_bound} is bounded above by this bound multiplied by $2$.
        
Let us move on to the second term of \eqref{eq:1st-term_bound}. Lemma~\ref{lem:periodize}.2 implies that $G$ has the pointwise-convergent Fourier series
\[ G(x)=\sum_{k\in \ZZ}\widehat{G}(k)\phi_k^{[-T,T]}(x),\]
where $\{\phi_k^{[-T,T]}(x):=\exp(2\pi i k(x+T)/(2T))/\sqrt{2T}\mid k\in \ZZ\}$ forms an orthonormal $L^2([-T,T])$ basis and $\widehat{G}(k)$ denotes the $k$-th Fourier coefficient
\[ \widehat{G}(k)=\int_{-T}^{T}G(x)\overline{\phi_k^{[-T,T]}(x)}\rd x.\]
Besides, it follows from the square integrability of $G^{(\alpha)}$ that $G^{(\alpha)}$ has the $L^2$-convergent Fourier series
\[ G^{(\alpha)}(x)=\sum_{k\in \ZZ}\widehat{G^{(\alpha)}}(k)\phi_k^{[-T,T]}(x)=\sum_{k\in \ZZ}\left( \frac{2\pi i k}{2T}\right)^{\alpha}\widehat{G}(k)\phi_k^{[-T,T]}(x),\]
see \cite[Proof of Lemma~4.1]{KSG2022}. 
Therefore, regarding the second term of \eqref{eq:1st-term_bound}, for $\Mstar$ and $\delta$ both given,
        \begin{align}
            \frac{2T}{\Mstar}\sum_{j=0}^{\Mstar-1}G(\xi_j^*)-\int_{-T}^{T}G(x)\rd x & = \frac{2T}{\Mstar}\sum_{j=0}^{\Mstar-1}\sum_{k\in \ZZ}\widehat{G}(k)\phi_{k}^{[-T,T]}(\xi_j^*)-\sqrt{2T}\widehat{G}(0)\\
            & = \sum_{k\in \ZZ}\widehat{G}(k)\frac{2T}{\Mstar}\sum_{j=0}^{\Mstar-1}\phi_{k}^{[-T,T]}(\xi_j^*)-\sqrt{2T}\widehat{G}(0),
        \end{align}
holds, in which, for any $k\in \ZZ$, we have
        \begin{align}
            \frac{2T}{\Mstar}\sum_{j=0}^{\Mstar-1}\phi_k^{[-T,T]}(\xi_j^*) & = \frac{\sqrt{2T}}{\Mstar}\sum_{j=0}^{\Mstar-1}e^{2\pi ik(\xi_j^*+T)/(2T)} = \frac{\sqrt{2T}}{\Mstar}\sum_{j=0}^{\Mstar-1}e^{2\pi ik(j+\delta)/\Mstar} \\
            & = \frac{e^{2\pi ik \delta/\Mstar}\sqrt{2T}}{\Mstar}\sum_{j=0}^{\Mstar-1}e^{2\pi ik(j/\Mstar)}\\
            & = \begin{cases} e^{2\pi ik \delta/\Mstar}\sqrt{2T} & \text{if $k\equiv 0 \pmod {\Mstar}$,}\\ 0 & \text{otherwise.}\end{cases}\label{eq:character}
        \end{align}
Therefore, we obtain
        \begin{align*}
            & \EE\left[\left(\frac{2T}{\Mstar}\sum_{j=0}^{\Mstar-1}G(\xi_j^*)-\int_{-T}^{T}G(x)\rd x\right)^2\right] \\
            & \quad = \EE\left[\left( \sum_{\substack{k\in \ZZ\setminus \{0\}\\ k\equiv 0 \pmod {\Mstar}}}e^{2\pi ik \delta/\Mstar}\sqrt{2T}\widehat{G}(k)\right)^2\right] \\
            & \quad 
            = 
            2T\, \EE\left[ \sum_{\substack{k,\ell\in \ZZ\setminus \{0\}\\ k, \ell\equiv 0 \pmod {\Mstar}}}e^{2\pi i(k-\ell) \delta/\Mstar}\widehat{G}(k)\overline{\widehat{G}(\ell)}\right]\\
            & \quad 
            = 
            \frac{2T}{n-1-\lfloor n/2\rfloor}\sum_{m_\star=\lfloor n/2\rfloor}^{n-2}\sum_{\substack{k,\ell\in \ZZ\setminus \{0\}\\ k,\ell\equiv 0 \pmod {m_\star}}}\widehat{G}(k)\overline{\widehat{G}(\ell)}\int_0^1 e^{2\pi i(k-\ell) \delta/m_\star}\rd \delta\\
            & \quad = \frac{2T}{n-1-\lfloor n/2\rfloor}\sum_{m_\star=\lfloor n/2\rfloor}^{n-2}\sum_{\substack{k\in \ZZ\setminus \{0\}\\ k\equiv 0 \pmod {m_\star}}}|\widehat{G}(k)|^2 \\
            & \quad = \frac{2T}{n-1-\lfloor n/2\rfloor}\sum_{k\in \ZZ\setminus \{0\}}|\widehat{G}(k)|^2\sum_{m_\star=\lfloor n/2\rfloor}^{n-2}\chi_{m_\star \mid k},
        \end{align*}
where, in the last line, $\chi_{m_\star \mid k}$ is equal to $1$ if $m_\star$ divides $k$ and is equal to $0$ otherwise. 
The last sum over $m_\star$ counts the number of divisors 
of $k$ in $\{\lfloor n/2\rfloor,\ldots,n-2\}$, which we denote by $\tau_{\lfloor n/2\rfloor}^{n-2}(k)$. Then, H\"{o}lder's inequality leads to
        \begin{align*}
            & \EE\left[\left(\frac{2T}{\Mstar}\sum_{j=0}^{\Mstar-1}G(\xi_j^*)-\int_{-T}^{T}G(x)\rd x\right)^2\right]\\
            & = \frac{2T}{n-1-\lfloor n/2\rfloor}\sum_{k\in \ZZ\setminus \{0\}}|\widehat{G}(k)|^2\tau_{\lfloor n/2\rfloor}^{n-2}(k) \\
            & = \frac{2T}{n-1-\lfloor n/2\rfloor}\sum_{k\in \ZZ\setminus \{0\}}|\widehat{G}(k)|^2\left( \frac{2\pi k}{2T}\right)^{2\alpha}\left( \frac{2T}{2\pi k}\right)^{2\alpha}\tau_{\lfloor n/2\rfloor}^{n-2}(k) \\
            & \leq \frac{2T}{n-1-\lfloor n/2\rfloor}\left(\sum_{k\in \ZZ\setminus \{0\}}|\widehat{G}(k)|^2\left( \frac{2\pi k}{2T}\right)^{2\alpha}\right) \sup_{k\in \ZZ\setminus \{0\}}\left( \frac{2T}{2\pi k}\right)^{2\alpha}\tau_{\lfloor n/2\rfloor}^{n-2}(k) \\
            & \leq \frac{2T^{2\alpha+1}}{\pi^{2\alpha}(n-1-\lfloor n/2\rfloor)}\|G\|^2_{\alpha,[-T,T]} \sup_{k\in \ZZ\setminus \{0\}} \frac{\tau_{\lfloor n/2\rfloor}^{n-2}(k)}{k^{2\alpha}},
        \end{align*}
where we used Parseval's identity for the last inequality; see \cite[Proof of Lemma~4.1]{KSG2022}. 
We trivially have $\tau_{\lfloor n/2\rfloor}^{n-2}(k)=0$ for any $|k|< \lfloor n/2\rfloor$. 
For the case $|k|\geq \lfloor n/2\rfloor$, since $m_\star$ divides $k$ if and only if $k/m_\star$ is an integer, 
$\tau_{\lfloor n/2\rfloor}^{n-2}(k)$ is bounded above by the number of integers contained in the interval 
\[ \left[ \frac{|k|}{n-2}, \frac{|k|}{\lfloor n/2\rfloor}\right].\]
Therefore, we have
       \begin{align*}
            \tau_{\lfloor n/2\rfloor}^{n-2}(k) \leq \frac{|k|}{\lfloor n/2\rfloor}-\frac{|k|}{n-2}+1\leq \frac{|k|}{n/2-1}-\frac{|k|}{n-2}+1=\frac{|k|}{n-2}+1.
        \end{align*}
Using this bound, noting $\alpha\geq1$, the supremum over $k$ above is bounded by
        \begin{align*}
            \sup_{k\in \ZZ\setminus \{0\}} \frac{\tau_{\lfloor n/2\rfloor}^{n-2}(k)}{k^{2\alpha}} \leq \sup_{k\geq \lfloor n/2\rfloor} \frac{1}{k^{2\alpha}}\left(\frac{k}{n-2}+1\right)=\frac{1}{\lfloor n/2\rfloor^{2\alpha}}\left(\frac{\lfloor n/2\rfloor}{n-2}+1\right) \leq \frac{2^{2\alpha+1}}{n^{2\alpha}}, 
        \end{align*}
which, together with Lemma~\ref{lem:periodize}.3 and Lemma~\ref{lem:KSG}, yields
        \begin{align*}
            \EE\left[\left(\frac{2T}{\Mstar}\sum_{j=0}^{\Mstar-1}G(\xi_j^*)-\int_{-T}^{T}G(x)\rd x\right)^2\right] & \leq \|G\|^2_{\alpha,[-T,T]}\frac{2^{2\alpha+2}T^{2\alpha+1}}{\pi^{2\alpha}(n-1-\lfloor n/2\rfloor)n^{2\alpha}} \\
            & \leq (C^*_{\alpha,\lambda})^2\|f\|^2_{\alpha} \frac{2^{2\alpha+4}T^{2\alpha+1}}{\pi^{2\alpha}n^{2\alpha+1}}.
        \end{align*}
Now that we have obtained upper bounds on the two terms of \eqref{eq:1st-term_bound}, we complete the proof.
\end{proof}
The benefit of choosing the number of nodes $\Mstar$ randomly manifests itself in the proof of Lemma~\ref{lem:proof_1st-term}. 
As indicated in \eqref{eq:character}, the trapezoidal rule with a fixed number of nodes $m$ does not give exact values for integrating Fourier modes $\phi^{[-T,T]}_k=\exp(2\pi i k(x+T)/(2T))/\sqrt{2T}$, $k\not=0$, if $k$ is a multiple of $m$. Thus, if the number of nodes is fixed, due to the amplitude of the $k$-th Fourier coefficient of the periodized function $G$ with $k$ being a multiple of $m$, the best RMSE attainable is already of order $m^{-\alpha}$ and not less (cf. the proof of \cite[Lemma~4.1]{KSG2022}). 
Although this rate is optimal in the sense of the (deterministic) worst-case error \eqref{eq:det-WCE} (cf.~\cite{KSG2022}), in view of the lower bound in Theorem~\ref{thm:lower_bound} we need a half rate extra for $A_{n,T}$ to be optimal in the sense of the worst-case RMSE.
By choosing $\Mstar$ randomly, on average we can avoid the situation where $k$ is a multiple of $m$, which leads to an improved, optimal rate of the RMSE. 

Not all quadratures require randomization of the number of nodes to achieve the optimality.
Indeed, using (scaled) higher-order scrambled digital nets from \cite{Di11} for the nodes $\xi_j^*$, instead of the equispaced points as in \eqref{eq:rand_trap}, following a similar argument as above, the same rate (up to a logarithmic factor) turns out to be achievable. 
Note however that constructing higher-order nets requires the smoothness parameter $\alpha$ as an input to attain the optimal rate of the RMSE for smooth non-periodic functions, which makes it hard for the resulting randomized algorithm to be free from $\alpha$ in the construction. Algorithm~\ref{alg:rand_trap} compares favorably in this regard, as we discuss later in Remark~\ref{rem:cutoff}.
\begin{lemma}[Upper bound on the RMSE for tails]\label{lem:proof_other-terms}
Let $\alpha\geq 1$, $n\geq 4$, and $T>0$ be given.
For any $f\in \Hcal_{\alpha}$, the second and third terms of \eqref{eq:whole-decomp} are bounded as
\begin{align*}
    \EE\left[ \left(\Phi(-T)f(\xi_{\mathrm{left}}^*)-\int_{-\infty}^{-T}f(x)\rho(x)\rd x\right)^2\right] 
    \leq \|f\|_{\alpha}^2\frac{1}{\sqrt{2\pi}Te^{T^2/2}},
\end{align*}
and
\begin{align*}
    \EE\left[ \left((1-\Phi(T))f(\xi_{\mathrm{right}}^*)-\int_{T}^{\infty}f(x)\rho(x)\rd x\right)^2\right]
    \leq \|f\|_{\alpha}^2\frac{1}{\sqrt{2\pi}Te^{T^2/2}},
\end{align*}
respectively.
\end{lemma}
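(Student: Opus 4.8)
The plan is to exploit the unbiasedness of the tail estimators already established in the proof of Lemma~\ref{lem:unbiased}. Once each bracket in the two terms is exhibited as $\Phi(-T)$ (resp.\ $1-\Phi(T)$) times a centred random variable, its mean square reduces to a scaled variance, which I bound crudely by a scaled second moment. That second moment is an integral of $f^2\rho$ over a tail, hence at most $\|f\|_{L_\rho^2}^2\le\|f\|_\alpha^2$ times the Gaussian tail mass, and the latter I estimate by the elementary Mills-ratio inequality $\Phi(-T)\le\rho(T)/T$. The surviving factor $\Phi(-T)$ then equals exactly $1/(\sqrt{2\pi}\,T\,e^{T^2/2})$.

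Concretely, for the second term recall that $\xi_{\mathrm{left}}^*$ has the truncated-normal density $\rho(x)/\Phi(-T)$ on $(-\infty,-T]$, so (as in the proof of Lemma~\ref{lem:unbiased}) $\int_{-\infty}^{-T}f(x)\rho(x)\rd x=\Phi(-T)\,\EE[f(\xi_{\mathrm{left}}^*)]$. Writing $a:=\Phi(-T)$ and $X:=f(\xi_{\mathrm{left}}^*)$, the bracket equals $a\bigl(X-\EE[X]\bigr)$, and I would estimate
\[
\EE\bigl[a^2(X-\EE[X])^2\bigr]=a^2\,\EE\bigl[(X-\EE[X])^2\bigr]\le a^2\,\EE[X^2]=a\int_{-\infty}^{-T}f(x)^2\rho(x)\rd x.
\]
Then $\int_{-\infty}^{-T}f(x)^2\rho(x)\rd x\le\|f\|_{L_\rho^2}^2\le\|f\|_\alpha^2$, while for $T>0$
\[
\Phi(-T)=\int_T^\infty\rho(x)\rd x\le\frac1T\int_T^\infty x\,\rho(x)\rd x=\frac{\rho(T)}{T}=\frac{1}{\sqrt{2\pi}\,T\,e^{T^2/2}},
\]
so multiplying $a$ by $\|f\|_\alpha^2$ and bounding $a$ by the last expression gives the claimed estimate.

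The third term is handled identically after replacing $\xi_{\mathrm{left}}^*$ by $\xi_{\mathrm{right}}^*$, whose density is $\rho(x)/(1-\Phi(T))$ on $[T,\infty)$, and using the symmetry $1-\Phi(T)=\Phi(-T)$ together with the same tail estimate. I do not expect a genuine obstacle here; the only points needing care are that the error term really collapses to a scaled variance (which uses the unbiasedness of the tail samples, and only the specified marginals of $\xi_{\mathrm{left}}^*$ and $\xi_{\mathrm{right}}^*$, not any independence), that $\int_{|x|\ge T}f^2\rho$ is finite and dominated by $\|f\|_\alpha^2$ since $f\in L_\rho^2(\RR)$, and that the Mills-ratio bound $\Phi(-T)\le\rho(T)/T$ is sharp enough to yield the stated constant $1/(\sqrt{2\pi}\,T\,e^{T^2/2})$.
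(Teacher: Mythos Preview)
Your proposal is correct and follows essentially the same approach as the paper: both arguments use the unbiasedness from Lemma~\ref{lem:unbiased} to reduce the mean-squared error to a variance, bound the variance by the second moment $\Phi(-T)\int_{-\infty}^{-T}f^2\rho\le\Phi(-T)\|f\|_\alpha^2$, and then apply the Mills-ratio inequality $\Phi(-T)\le e^{-T^2/2}/(T\sqrt{2\pi})$. The only difference is cosmetic---you factor out $a=\Phi(-T)$ before computing, whereas the paper works with $\Phi(-T)f(\xi_{\mathrm{left}}^*)$ as a whole.
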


\begin{proof}
We give a proof only for the bound on the second term of \eqref{eq:1st-term_bound} since the bound on the third term can be proven in the same way. 

From the proof of Lemma~\ref{lem:unbiased}
\[ \EE\left[ \Phi(-T)f(\xi_{\mathrm{left}}^*)\right]=\int_{-\infty}^{-T}f(x)\rho(x)\rd x, \]
holds, and thus
\begin{align*}
    & \EE\left[ \left(\Phi(-T)f(\xi_{\mathrm{left}}^*)-\int_{-\infty}^{-T}f(x)\rho(x)\rd x\right)^2\right] \\
    & \quad = \EE\left[ \left(\Phi(-T)f(\xi_{\mathrm{left}}^*)\right)^2\right]-\left( \int_{-\infty}^{-T}f(x)\rho(x)\rd x\right)^2 \\
    & \quad \leq \EE\left[ \left(\Phi(-T)f(\xi_{\mathrm{left}}^*)\right)^2\right] = \Phi(-T)\int_{-\infty}^{-T}|f(x)|^2\rho(x)\rd x \\
    & \quad \leq \Phi(-T)\|f\|^2_{L^2_\rho}\leq \Phi(-T)\|f\|^2_{\alpha},
\end{align*}
in which we have
\begin{align*}
    \Phi(-T) & = \frac{1}{\sqrt{2\pi}}\int_{-\infty}^{-T}e^{-x^2/2}\rd x = \frac{1}{\sqrt{2\pi}}\int_{T}^{\infty}e^{-x^2/2}\rd x \\
    & \leq \frac{1}{\sqrt{2\pi}}\int_{T}^{\infty}\frac{x}{T}\cdot e^{-x^2/2}\rd x = \frac{e^{-T^2/2}}{T\sqrt{2\pi}}.
\end{align*}
This proves the result.
\end{proof}

Applying the results from Lemma~\ref{lem:proof_1st-term} and \ref{lem:proof_other-terms} to \eqref{eq:whole-decomp}, we obtain
\begin{align*} 
& \EE\left[ \left( A_{n,T}^{\Mstar,\delta}(f)-I(f)\right)^2\right] \\
& \quad \leq 3(C^*_{\alpha,\lambda})^2\|f\|_{\alpha}^2\left(  \frac{2\alpha^2\max\{1, (2T)^{2\alpha}\}}{e^{(1-\lambda)T^2}}+ \frac{2^{2\alpha+5}T^{2\alpha+1}}{\pi^{2\alpha}n^{2\alpha+1}}\right) + \|f\|_{\alpha}^2\frac{6}{\sqrt{2\pi}Te^{T^2/2}},
\end{align*}
so that 
\begin{align*}
    & e^{\rmse}(A_{n,T},\Hcal_{\alpha}) = \sup_{\substack{f\in \Hcal_{\alpha}\\ \|f\|_{\alpha}\leq 1}}\left(\EE\left[ \left( A_{n,T}^{\Mstar,\delta}(f)-I(f)\right)^2\right]\right)^{1/2} \\
    & \quad \leq \left( 3(C^*_{\alpha,\lambda})^2\left(  \frac{2\alpha^2\max\{1, (2T)^{2\alpha}\}}{e^{(1-\lambda)T^2}}+ \frac{2^{2\alpha+5}T^{2\alpha+1}}{\pi^{2\alpha}n^{2\alpha+1}}\right) + \frac{6}{\sqrt{2\pi}Te^{T^2/2}}\right)^{1/2}.
\end{align*}
Choosing the cut-off parameter $T$ according to \eqref{eq:cutoff_choice}, we have $T>1$ and
\begin{align}
    &\quad \quad e^{\rmse}(A_{n,T},\Hcal_{\alpha}) \leq \frac{T^{\alpha+1/2}}{n^{\alpha+1/2}}\left( 3\left(  2^{2\alpha+1}\alpha^2+ \pi^{-2\alpha}2^{2\alpha+5}\right)(C^*_{\alpha,\lambda})^2 + \frac{6}{\sqrt{2\pi}}\right)^{1/2}\label{eq:explicitconstant1}\\
    & \quad = \frac{(\ln n)^{\alpha/2+1/4}}{n^{\alpha+1/2}}\sqrt{\left(\frac{2\alpha+1}{1-\lambda}\right)^{\alpha+1}\left( 3\left(  2^{2\alpha+1}\alpha^2+ \pi^{-2\alpha}2^{2\alpha+5}\right)(C^*_{\alpha,\lambda})^2 + \frac{6}{\sqrt{2\pi}}\right)},\label{eq:explicitconstant2}
\end{align}
which completes the proof of Theorem~\ref{thm:upper_bound}.

\begin{remark}\label{rem:rate-and-tail}
The quadrature for the tail 
$\Phi(-T)f(\xi_{\mathrm{left}}^*)+(1-\Phi(T))f(\xi_{\mathrm{right}}^*)$ in the algorithm \eqref{eq:rand_trap} are utilized only to make the estimator unbiased and thus to obtain the empirical error estimator~\eqref{eq:sample_variance}.  
Indeed, the rate $O(n^{-\alpha-1/2}(\ln n)^{\alpha/2+1/4})$ as in Theorem~\ref{thm:upper_bound} can be established even without these nodes, since under the decay of the integrands assumed, the dominant error comes from the integration error on $[-T,T]$ as in Lemma~\ref{lem:proof_1st-term}.
\end{remark}

\begin{remark}\label{rem:cutoff}
In Theorem~\ref{thm:upper_bound}, the cut-off parameter $T$ depends on the smoothness parameter $\alpha$. This may be a problem in practice, since the smoothness of the target integrand may be unknown. One way to make the algorithm independent of $\alpha$, as we did in \cite[Corollary~4.4]{KSG2022}, is to replace $\alpha$ in \eqref{eq:cutoff_choice} with any slowly increasing function $\gamma: \NN\to [0,\infty)$, such as $\gamma(n)=\ln(n)$ or $\gamma(n)=\max\{\ln(\ln(n)),0\}$. The resulting randomized trapezoidal rule does not require any information about $\alpha$, but still achieves the optimal rate up to a factor of $(\gamma(n)\ln n)^{\alpha/2+1/4}$.
\end{remark}

\section{Numerical experiments}\label{sec:experiments}
We conclude this paper with numerical experiments for our randomized trapezoidal rule $A_{n,T}$. In what follows, all computations are carried out in double precision arithmetic using MATLAB 2020a. Let us consider the following three test functions with different smoothness:
\begin{align*}
    f_1(x) & = f_{1,p}(x) = (\max\{x,0\})^p,\quad \text{with $p  =1,2,3$,}\\
    f_2(x) & = \begin{cases}
    e^{-1/(1-x^2)} & \text{for $|x|<1,$}\\ 0 & \text{otherwise,}\end{cases}\\
    f_3(x) & = (\tanh(x))^2.
\end{align*}
The function $f_1=f_{1,p}$ satisfies $f_1\in \Hcal_{p}$ and $f_1\notin \Hcal_{p+1}$, which we exploit to check how well our theory explains the error that the estimator estimates.
The functions $f_2$ and $f_3$ go beyond our theory, in that the Sobolev class does not capture their smoothness; they are infinitely differentiable and bounded on $\RR$ and thus are in $\Hcal_{\alpha}$ for any $\alpha\in\mathbb{N}$. 
Note that $f_3$ is analytic, while $f_2$ is not.

Throughout the following experiments, we set $\lambda=0.51$. 
We estimate the mean-squared error by following \eqref{eq:sample_variance} with $r=50$ and $n$ being powers of $2$. The reason why we estimate the mean-squared error instead of the RMSE is only because of the fact that the square root of \eqref{eq:sample_variance} is not an unbiased estimator of the RMSE. We also stress that, even though $f_2$ and $f_3$ go beyond our theory, the practical error estimator \eqref{eq:sample_variance} is still valid. 

For $f_1$, we test two cut-off strategies to choose the value of $T$.
One way is choosing $T$ depending on $\alpha$ and we set $\alpha=p$ for $f_1=f_{1,p}$; see \eqref{eq:cutoff_choice} in  Theorem~\ref{thm:upper_bound}.
Another way is to use our $\alpha$-free cut-off, in which we replace  $\alpha$ in \eqref{eq:cutoff_choice} with $\gamma(n)=\max\{\ln(\ln(n)),0\}$; see  Remark~\ref{rem:cutoff}. Figure~\ref{fig:finite} presents error decays for $f_1$ with varying values of $p$, with two cut-off strategies outlined above.   
As expected from the theory, the convergence rate of $n^{-2p-1}$ is achieved for each value of $p$. Moreover, we see that our $\alpha$-free choice of the cut-off parameter does not deteriorate the performance of the randomized trapezoidal rule, which supports our theory.

\begin{figure}
\centering
 \includegraphics[width=0.45\textwidth]{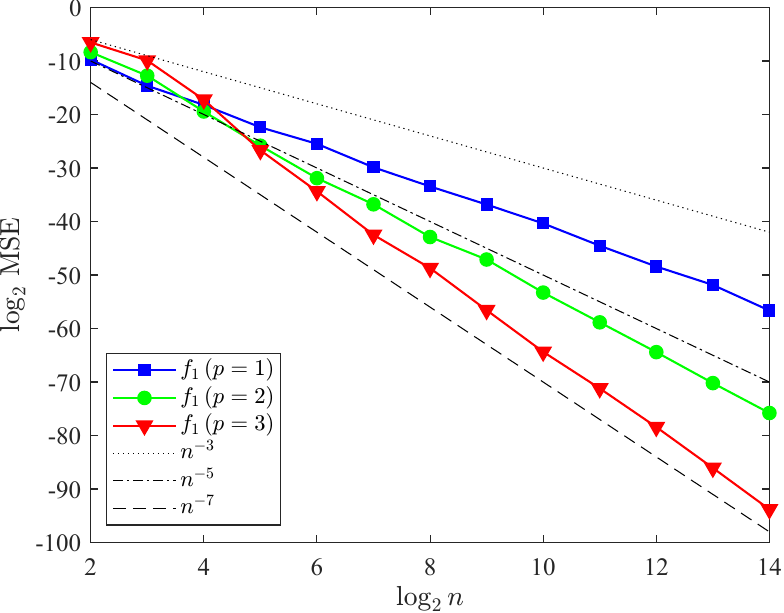}\, \, 
 \includegraphics[width=0.45\textwidth]{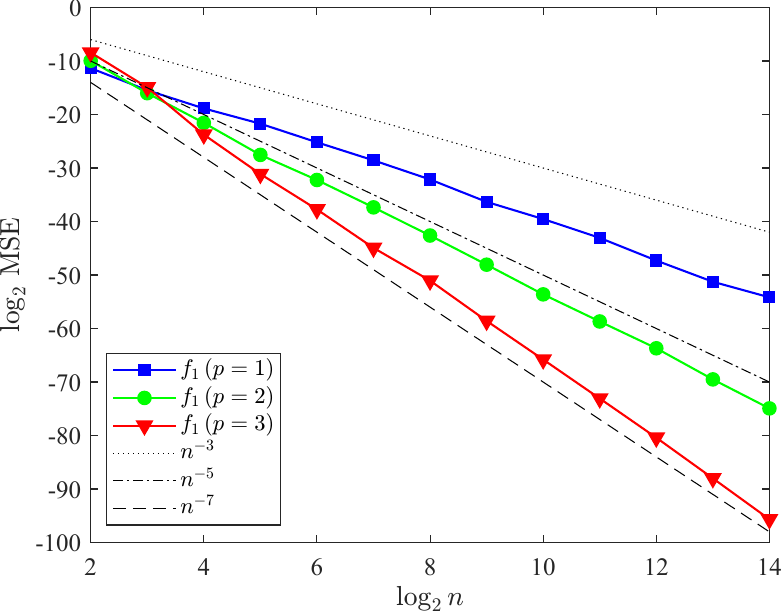}
 \caption{Mean-squared error for $f_1$ with various values of $p$. In the left panel, the cut-off parameter $T$ is chosen as in \eqref{eq:cutoff_choice}; in the right panel,  $T$ is chosen independently of $\alpha$.}
 \label{fig:finite}
\end{figure}

For the functions $f_2$ and $f_3$, we conduct the experiments only for the $\alpha$-free choice of the cut-off parameter $T$. 
Figure~\ref{fig:infinite} shows the results. 
Since the functions are infinitely smooth, the mean-squared error is {expected to} decay at least at the rate $n^{-2\alpha-1}$ for any finite $\alpha$. In fact, we observe that the error decays super-algebraically fast until it drops down to around $2^{-100}$. 
This error decay is consistent with our theory. Nevertheless, it does indicate that our theory may not be suitable for providing sharp error estimates for infinitely smooth functions. For such functions, it may be useful to employ function classes used to analyze e.g., analytic functions.
We refer to \cite{Su1997,TW2014} for error estimates of a deterministic trapezoidal rule for analytic functions. Setting the theory aside, we again stress that the crucial advantage of our randomized rule is that it allows for an error estimation, which is quite hard for deterministic quadrature rules.

\begin{figure}
\centering
 \includegraphics[width=0.45\textwidth]{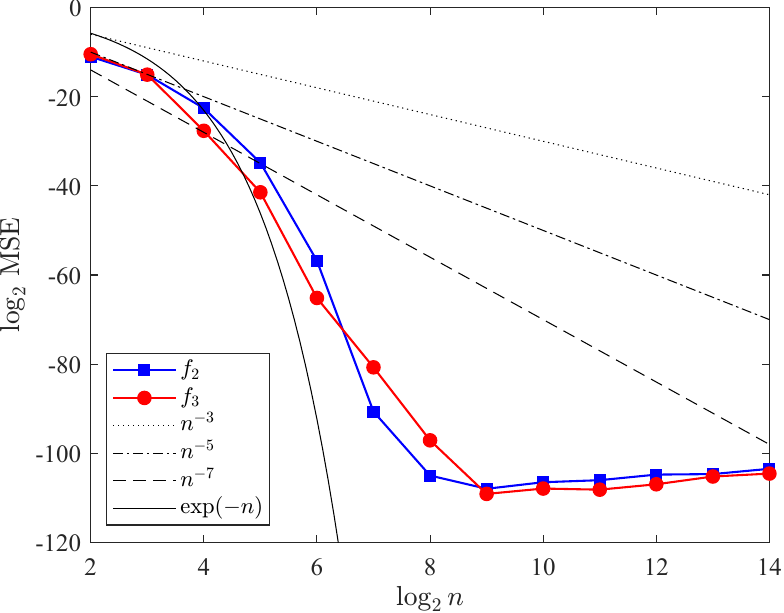}
 \caption{Mean-squared error for $f_2$ and $f_3$.}
 \label{fig:infinite}
\end{figure}

\bibliographystyle{plain}
\bibliography{ref.bib}

\end{document}